\newcommand{\keywords}[1]{\par\addvspace\baselineskip
\noindent\keywordname\enspace\ignorespaces#1}
\newcommand{\Z}{\mathbb{Z}}
\newcommand{\N}{\mathbb{N}}
\newcommand{\B}{\mathcal{B}}
\newcommand{\X}{\mathcal{X}}
\newcommand{\V}{\mathcal{V}}
\newcommand{\E}{\mathcal{E}}
\newcommand{\Orb}{\mathcal{O}}
\newcommand{\Per}{\mathcal{P}}
\newcommand{\INF}{{}^\infty}
\newcommand{\ID}{\mathrm{id}}
\newcommand{\lcm}{\mathrm{lcm}}
\newcommand{\rank}{\mathrm{rank}}
\newcommand{\Pol}{\mathsf{P}}
\newcommand{\NP}{\mathsf{NP}}
\newcommand{\GI}{\mathsf{GI}}
\newcommand{\orb}{\circlearrowright}
\title{Complexity of Conjugacy, Factoring and Embedding for Countable Sofic Shifts of Rank 2}
\titlerunning{Conjugacy, Factoring and Embedding for Countable Sofic Shifts}
\author{
Ville Salo
\and
Ilkka T\"orm\"a
}
\institute{
		TUCS -- Turku Centre for Computer Science \\
		University of Turku, Finland \\
		\email{\{vosalo,iatorm\}@utu.fi}
}
\begin{document}
\maketitle

\begin{abstract}
In this article, we study countable sofic shifts of Cantor-Bendixson rank at most 2. We prove that their conjugacy problem is complete for $\GI$, the complexity class of graph isomorphism, and that the existence problems of block maps, factor maps and embeddings are $\NP$-complete.
\end{abstract}

\keywords{sofic shift, SFT, topological conjugacy, graph isomorphism, complexity class}

\section{Introduction}

The computational complexity class $\GI$ is defined as the set of decision problems reducible to the graph isomorphism problem in polynomial time. The class is one of the strongest candidates for an $\NP$-intermediate class, that is, one that lies strictly between $\Pol$ and $\NP$. It contains a plethora of other isomorphism problems of finite objects, like multigraphs, hypergraphs, labeled or colored graphs, finite automata and context-free grammars, most of which are $\GI$-complete. The classical reference for the subject is \cite{ZeKoTy85}.

In the field of symbolic dynamics, which studies sets of infinite sequences of symbols as topological dynamical systems, there is a fundamental isomorphism problem whose decidability has been open for some decades: the conjugacy problem of shifts of finite type. A shift of finite type is a class of sequences defined by finitely many forbidden patterns that must never occur, and a conjugacy is a homeomorphism that commutes with the shift transformation, or in combinatorial terms, a bijection between shifts of finite type defined by a local rule. The most common version of this problem further restricts to \emph{mixing} shifts of finite type, whose dynamics is intuitively the most random and unpredictable. See \cite{Bo08} for a review of the problem (and many others).

In this article, we take a different approach, and study the conjugacy problem of \emph{countable sofic shifts}. Sofic shifts are a generalization of shifts of finite type, and the class of countable shifts of finite type can be seen as the polar opposite of the mixing class, as they are very well-structured and combinatorial. Their properties have previously been studied at least in \cite{SaTo12cn,SaTo13pub,PaSc10,BaDuJe08,BaJe13} (although usually somewhat indirectly).

A useful tool in the study of countable sofic shifts (and topological spaces in general) is the Cantor-Bendixson rank. Every countable sofic shift has such a rank, which is a number $n \in \N$, and we study the first few levels of this hierarchy in this article. Rank $1$ countable sofic shifts are the finite ones, and the conjugacy problem for them is very easy (Proposition~\ref{prop:Rank1}). Rank $2$ countable sofic shifts are the first non-trivial case, and our main result states that the conjugacy problem of rank $2$ countable SFTs and sofic shifts is $\GI$-complete (with respect to polynomial-time many-one reductions), when the shift spaces are given by right-resolving symbolic edge shifts. Using the same methods, we also prove that the existence of block maps, factor maps and embeddings between rank $2$ countable sofic shifts is $\NP$-complete. Of course, corresponding hardness results follow for general SFTs, since countable SFTs of rank $2$ are a (very small) subcase. However, we are not able to extract any corollaries for the usual case of mixing SFTs.

Note that it is of course $\GI$-complete to check whether two given edge shifts are isomorphic in the sense that the graphs defining them are isomorphic. This is not equivalent to conjugacy of the edge shifts, as shown in Example~\ref{ex:ConjugateEdgeShifts}. The graph representations we use are more canonical representations of the shift spaces, and very specific to the rank $2$ case.

\section{Definitions}

Let $A$ be a finite set, called the \emph{alphabet}, whose elements are called \emph{symbols}. We equip the set $A^\Z$ with the product topology and define the \emph{shift map} $\sigma : A^\Z \to A^\Z$ by $\sigma(x)_i = x_{i + 1}$ for all $x \in A^\Z$ and $i \in \Z$. The pair $(A^\Z, \sigma)$ is a dynamical system, called the \emph{full shift over $A$}. For a word $w \in A^*$ and $x \in A^\Z$, we say that $w$ \emph{occurs in} $x$, denoted $w \sqsubset x$, if there exists $i \in \Z$ with $x_{[i, i+|w|-1]} = w$.

A topologically closed and $\sigma$-invariant subset $X \subset A^\Z$ is called a \emph{shift space}. Alternatively, a shift space is defined by a set $F \subset A^*$ of \emph{forbidden patterns} as $X = \{ x \in A^\Z \;|\; \forall w \in F : w \not\sqsubset x \}$. If $F$ is finite, then $X$ is a \emph{shift of finite type}, or \emph{SFT} for short, and if $F$ is a regular language, then $X$ is a \emph{sofic shift}. For $n \in \N$, we denote $\B_n(X) = \{ w \in A^n \;|\; x \in X, w \sqsubset x \}$, and define the \emph{language} of $X$ as $\B(X) = \bigcup_{n \in \N} \B_n(X)$. We also denote by $\Per(X) = \{ x \in X \;|\; \exists p \in \N : \sigma^p(x) = x \}$ the set of $\sigma$-periodic points of $X$. We say $x$ and $y$ are left asymptotic if $x_i = y_i$ for all small enough $i$, and right asymptotic if this holds for all large enough $i$.

A shift space is uniquely determined by its language, so for a language $L$ such that $w \in L$ always implies $uwv \in L$ for some nonempty words $u, v \in A^*$ (often called an \emph{extendable} language), we denote $X = \B^{-1}(L)$, where $X$ is the unique shift space such that $\B(X) = \{w \;|\; \exists u, v \in A^*: uwv \in L\}$. For a configuration $x \in A^\Z$, we write $\Orb_\sigma(x) = \{\sigma^n(x) \;|\; n \in \Z\}$ for the \emph{$\sigma$-orbit of $x$}, and $\overline{X}$ for the topological closure of $X$, when $X$ is a subset of $A^\Z$.

A continuous function $f : X \to Y$ between shift spaces satisfying $\sigma|_Y \circ f = f \circ \sigma|_X$ is a \emph{block map}. Alternatively, a block map is defined by a \emph{local function} $\hat f : \B_{2r + 1}(X) \to \B_1(Y)$, where $r \in \N$ is called a \emph{radius} of $f$, as $f(x)_i = \hat f(x_{[i-r, i+r]})$. Block maps with radius $0$ are called \emph{symbol maps} and identified with their local functions. Sofic shifts are exactly the images of SFTs under block maps. The standard reference for shift spaces and block maps is \cite{LiMa95}.

If a block map $f : X \to Y$ is bijective, it is known that its inverse is also a block map, and then $f$ is called a \emph{conjugacy} between $X$ and $Y$. A surjective block map $f : X \to Y$ is called a \emph{factor map} from $X$ to $Y$, and such an injection is called an \emph{embedding}. The problem of deciding whether two SFTs are conjugate is known as the \emph{strong shift equivalence problem}, and its decidability has been open for several decades. See \cite{Bo08} for more information on this and other open problems in symbolic dynamics.

We include a full definition of Cantor-Bendixson rank for completeness, although we only need this concept for natural numbers in the case of sofic shifts. Let $X$ be a topological space. The \emph{Cantor-Bendixson derivative} of $X$ is the set $X' = \{x \in X \;|\; \overline{X \setminus \{x\}} = X\} \subset X$. In other words, $X'$ is exactly $X$ minus its isolated points. For every ordinal $\lambda$, we define the $\lambda$'th iterated derivative of $X$, denoted $X^{(\lambda)}$, as follows.
\begin{itemize}
\item If $\lambda = 0$, then $X^{(\lambda)} = X$.
\item If $\lambda = \beta + 1$, then $X^{(\lambda)} = (X^{(\beta)})'$.
\item If $\lambda$ is a limit ordinal, then $X^{(\lambda)} = \bigcap_{\beta < \lambda} X^{(\beta)}$.
\end{itemize}
The smallest ordinal $\lambda$ such that $X^{(\lambda)} = X^{(\lambda + 1)}$ is called the \emph{Cantor-Bendixson rank} of $X$, and denoted $\rank(X)$. If $X$ is a shift space, then it is countable if and only if $X^{(\rank(X))} = \emptyset$. In this case, the \emph{rank} of a point $x \in X$, denoted $\rank_X(x)$, is the least ordinal $\lambda$ such that $x \notin X^{(\lambda)}$. It is not hard to show that if $X$ is a shift space, then $X^{(\lambda)}$ is a shift space for all ordinals $\lambda$.

By a \emph{graph} we understand a tuple $G = (V, E)$, where $V = \V(G)$ is a finite set of \emph{vertices} and $E = \E(G)$ a set of \emph{edges}, which are two-element subsets of $V$ (so self-loops and multiple edges are not allowed). A \emph{labeled graph} is a triple $G = (V, E, \pi)$, where $\E(G) = E$ is now a set of tuples $(e, \ell)$ with $e$ an edge and $\ell \in L$ its \emph{label}, and $\pi = \pi_G : E \to L$ is the \emph{labeling function} given by $\pi(e, \ell) = \ell$. For a set $C$, a \emph{$C$-colored graph} has its vertices colored with elements of $C$ so that no adjacent vertices have the same color. Directed versions of all types of graphs have tuples as edges, instead of sets, and may have self-loops. Finally, a (directed) \emph{multigraph} is similar to a (directed) graph, but its edges form a multiset, so multiple edges between two vertices are allowed.

Homomorphisms of $C$-colored graphs must preserve the colors. For a fixed graph $H$ which is not bipartite, it is $\NP$-complete to decide whether there exists a homomorphism from a given graph $G$ to $H$, by a result of Hell and Ne\v{s}et\v{r}il \cite{HeNe90}. Likewise, for some classes of graphs $H$, it is $\NP$-complete whether an edge-surjective homomorphism (also known as a \emph{compaction}) exists from a given graph $G$ to $H$ \cite{Vi97}. Deciding the existence of edge-injective homomorphisms between two graphs is $\NP$-complete \cite{Bi05}, but not if one of them is fixed.

Every SFT is conjugate to an \emph{edge shift}, the SFT $X$ defined by a directed multigraph $G$ over the alphabet of edges $\E(G)$ as follows. A set of forbidden patterns for $X$ is given by the pairs of edges $e e'$ such that the target of $e$ differs from the source of $e'$, or in other words, $X$ is the set of edges of all bi-infinite walks in $G$. Similarly, every sofic shift is conjugate to a \emph{symbolic edge shift}, the image of an edge shift under a symbol map $\pi : \E(G) \to A$. Equivalently, a symbolic edge shift consists of the labels of all bi-infinite walks in a labeled directed graph. A symbolic edge shift is called \emph{right-resolving} if for each vertex $v \in G$, any distinct edges that start from $v$ have different labels.

The \emph{graph isomorphism problem} is the problem of deciding whether two graphs, encoded as lists of edges, are isomorphic. The set of decision problems polynomial-time reducible to the graph isomorphism problem is denoted $\GI$. It is known that $\Pol \subseteq \GI \subseteq \NP$. Examples of $\GI$-complete isomorphism problems include those of directed graphs, labeled graphs and $\{0, 1\}$-colored graphs. The classical reference for $\GI$ is \cite{ZeKoTy85}. In this paper, hardness and completeness with respect to a complexity class are taken with respect to standard polynomial time many-one reductions.

\section{Countable Sofic Shifts}

In this section, we give some background on countable sofic shifts and countable SFTs, and present their basic properties. As a conclusion, we obtain a combinatorial characterization of rank 2 sofic shifts in Corollary~\ref{cor:Rank2}. The point of this section is mainly to put our results into place in the theory of symbolic dynamics. Readers interested in the complexity theoretic result only can take the definition of a rank 2 sofic shift to be the second condition listed in Corollary~\ref{cor:Rank2}, and otherwise skip this section.

\begin{lemma}[Proposition 3.8 of \cite{SaTo13pub}]
\label{lem:FiniteRank}
 All sofic shifts have finite Cantor-Bendixson rank.
\end{lemma}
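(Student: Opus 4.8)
The plan is to translate the topological statement into a combinatorial one about a finite graph. First I would fix a finite labeled directed graph $G = (\V(G), \E(G), \pi)$ presenting $X$ as the set of labels of bi-infinite walks, trimmed so that every vertex and edge actually lies on such a walk. Since $G$ is finite, I pass to its condensation into strongly connected components (SCCs), which is a finite acyclic graph; let $d$ denote the length of its longest directed chain of SCCs, so $d < \infty$. The goal is to show $\rank(X) \le d$.

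The central observation is that, because $G$ is finite and its condensation is acyclic, every bi-infinite walk is eventually confined to a single SCC as its index tends to $-\infty$ and to a single (possibly different) SCC as it tends to $+\infty$, passing through only finitely many SCCs in between. I would then characterize the isolated points combinatorially: $x$ is isolated iff it is pinned down by some central window $x_{[-n,n]}$, which happens exactly when both the left-infinite and the right-infinite tail of $x$ have a unique extension in $X$. I claim this is equivalent to $x$ admitting a presenting walk whose backward tail is confined to a source SCC and whose forward tail is confined to a sink SCC, with each confinement taking place in a single-cycle SCC so that the two infinite tails are genuinely rigid.

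Granting this, I would attach to each point a combinatorial rank $\rho(x)$ measuring how much ``room'' its tails leave in the condensation --- roughly, the longest chain from a source down to the backward-tail SCC plus the longest chain from the forward-tail SCC down to a sink, maximized over all presenting walks of $x$. A walk sitting inside a single SCC high in the chain leaves maximal room, whereas a walk that has already transited from a source all the way to a sink leaves room $1$; in all cases $\rho(x) \le d$. I would then prove by induction on $k$ that $X^{(k)} = \{x \in X \mid \rho(x) > k\}$: the base case $k=0$ is the characterization of isolated points above, and the inductive step re-runs that characterization inside $X^{(k)}$, which is again a sofic shift whose source and sink layers have been peeled one step inward. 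Since $\rho \le d$ everywhere, this gives $X^{(d)} = X^{(d+1)}$ and hence $\rank(X) \le d$.

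The main obstacle is precisely the combinatorial description of isolated points, i.e. matching topological rigidity with the graph structure, and two complications must be dealt with. A single point may be presented by several walks, so ``unique extension'' has to be phrased as a statement quantified over all presenting walks; this is cleanest in a well-chosen presentation, e.g. a right-resolving one. Moreover an SCC may contain more than one cycle, in which case it carries uncountably many walks and every point whose tail lingers there is a limit of nearby points inside the same SCC, hence never isolated. Such points form a perfect subshift that survives every derivative, but this does not damage the argument: these multi-cycle SCCs simply make up the stable kernel $X^{(\rank(X))}$, which is still reached after at most $d$ derivatives, so the rank stays finite even when $X$ is uncountable.
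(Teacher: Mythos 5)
The paper does not actually prove this lemma: it is imported as Proposition~3.8 of \cite{SaTo13pub}, so there is no in-paper argument to compare yours against. Your overall strategy --- fix a finite presenting graph, pass to its condensation into strongly connected components, and bound the Cantor-Bendixson rank by the length of the longest chain of SCCs --- is the natural one, and the bound itself is correct (for countable sofic shifts it matches Lemma~\ref{lem:CombiRank}, since the number of ``disturbance blocks'' in a combinatorial representation is at most the number of SCC transitions a walk can make). The problem is that the two steps carrying all the weight are asserted rather than proved, and one of them is false as stated.

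Your characterization of isolated points quantifies existentially over presenting walks (``$x$ admitting a presenting walk whose backward tail is confined to a source SCC and whose forward tail to a sink SCC''). For a general presentation this is wrong: present $\B^{-1}(a^*cb^*) \cup \B^{-1}(a^*cb^*db^*)$ by the disjoint union of two labeled subgraphs, one for each term. The point $\INF a\, c\, b \INF$ then has a presenting walk running from a single-cycle source SCC straight to a single-cycle sink SCC inside the first subgraph, yet it is not isolated, being the limit of the isolated points $\INF a\, c\, b^m d\, b \INF$ supplied by the second subgraph. You flag the issue (``quantified over all presenting walks\,\ldots\,cleanest in a right-resolving presentation''), but right-resolvingness only disambiguates the forward direction, and you never commit to a version of the characterization that is actually true. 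The second gap is the inductive step: you describe $X^{(k)}$ as ``a sofic shift whose source and sink layers have been peeled one step inward,'' but the derivative is not presented by the peeled graph --- already for $X = \B^{-1}(a^*cb^*)$, deleting the source and sink SCCs leaves a graph supporting no bi-infinite walk, while $X'$ consists of two periodic points. What your argument really rests on is the level-set identity $X^{(k)} = \{x \mid \rho(x) > k\}$ (modulo the perfect kernel), and that identity \emph{is} the content of the lemma; as written it is claimed, not established. The skeleton is right, but the proof is not yet there.
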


\begin{lemma}[Corollary of Lemma 4.8 of \cite{PaSc10}]
\label{lem:Repr}
A shift space $X \subset A^\Z$ is a countable sofic shift if and only if it can be presented as a finite union of shift spaces of the form
\[ \X(u_0, \ldots, u_m, v_1, \ldots, v_m) = \B^{-1}(u_0^* v_1 u_1^* \cdots u_{n-1}^* v_m u_m^*), \]
where $u_i \in A^+$ and $v_i \in A^*$, and each configuration $\INF u_i v_{i+1} u_{i+1} \INF$ is aperiodic.
\end{lemma}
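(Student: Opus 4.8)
The plan is to prove the two implications separately, treating the ``if'' direction by direct construction and reducing the ``only if'' direction to the structural result of \cite{PaSc10}. For the easy direction, suppose $X$ is a finite union of shifts $\X(u_0,\ldots,u_m,v_1,\ldots,v_m)$. Each such shift is sofic: its language is the factor closure of the regular expression $u_0^* v_1 u_1^* \cdots v_m u_m^*$, factor closures of regular languages are regular, and a shift space with regular language is sofic; a finite union of sofic shifts is sofic, so $X$ is. For countability I would describe the configurations explicitly. A configuration of $\X(u_0,\ldots,u_m,v_1,\ldots,v_m)$ is a bi-infinite walk in the finite ``caterpillar'' automaton recognizing $u_0^* v_1 u_1^* \cdots v_m u_m^*$, which consists of $m+1$ cycles (one per $u_i$) joined in a line by the transition paths $v_i$. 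Such a walk must enter from the left inside one cycle $u_i$, so the configuration is left-asymptotic to $\INF u_i$, traverse finitely much of the line, and leave to the right inside some cycle $u_j$ with $j \ge i$, so it is right-asymptotic to $u_j \INF$; the finite transitional data together with the remaining shift gives only countably many configurations, and a finite union of countable sets is countable.

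For the harder direction, let $X$ be a countable sofic shift. Here I would invoke Lemma~4.8 of \cite{PaSc10} for the structural content I need: the periodic points of $X$ form finitely many shift orbits $\Orb_\sigma(\INF w_1 \INF), \ldots, \Orb_\sigma(\INF w_k \INF)$, and every point of $X$ is doubly eventually periodic, i.e.\ left-asymptotic to some $\INF w_i \INF$ and right-asymptotic to some $\INF w_j \INF$. Concretely, I would pass to a right-resolving presentation $G$ of $X$ and use countability to force that every nontrivial strongly connected component of $G$ is a single simple cycle, since a component carrying two distinct return loops would embed the full $2$-shift and contradict countability. This exhibits $G$ as a finite acyclic arrangement of labeled cycles joined by transition paths. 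Enumerating the finitely many maximal directed paths through this arrangement, each path running from an initial cycle through a chain of cycles to a terminal cycle yields exactly one piece $u_0^* v_1 u_1^* \cdots v_m u_m^*$, with the $u_i$ the cycle labels and the $v_i$ the transition labels, and the union of these pieces recovers $X$.

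It remains to normalize the decomposition so that each $\INF u_i v_{i+1} u_{i+1} \INF$ is aperiodic. If some transition makes $\INF u_i v_{i+1} u_{i+1} \INF = \INF w \INF$ periodic, then $u_i$, $v_{i+1}$ and $u_{i+1}$ are all consistent with the $w$-periodicity, so $\INF u_i \INF$ and $\INF u_{i+1} \INF$ lie in a common orbit and the transition contributes no new point; I would contract such degenerate transitions and split off the resulting periodic orbit as a trivial piece $\X(w) = \B^{-1}(w^*)$ (the case $m=0$, which carries no aperiodicity condition), after which every surviving transition is genuine and aperiodicity holds. The main obstacle is the faithful translation between the combinatorics of the presentation graph and the exact regular-expression form: I must track labels so that the cycles and transition paths yield words $u_i \in A^+$ and $v_i \in A^*$ with the correct labels, verify that the union of path-pieces equals $X$ with no spurious or missing configurations, and carry out the aperiodicity normalization without breaking soficness. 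This bookkeeping, rather than any single difficult idea, is where the care is needed, the genuinely hard structural input being supplied by \cite{PaSc10}.
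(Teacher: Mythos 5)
The paper does not actually prove this lemma: it is stated as a corollary of Lemma~4.8 of \cite{PaSc10} and imported wholesale, so there is no internal proof to compare against. Your reconstruction is sound and, notably, uses exactly the machinery the paper deploys later in the proof of Lemma~\ref{lem:Representations} (pass to the minimal right-resolving presentation, observe that countability forces the cycles to be pairwise disjoint, and read off the decomposition from chains of cycles joined by transitional paths of bounded length). Two points deserve slightly more care than your sketch gives them. First, ``a component carrying two distinct return loops would embed the full $2$-shift'' is a statement about the underlying \emph{edge} shift; the sofic shift is its image under the labeling, and an uncountable edge shift can in principle map onto a countable label shift. The right-resolving hypothesis is what rescues this: distinct paths of equal length from a common vertex have distinct labels, so two intersecting cycles force exponentially many legal words of each length, hence positive entropy, hence uncountability of $X$ itself. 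Second, in the aperiodicity normalization, when $\INF u_i v_{i+1} u_{i+1} \INF$ is periodic with period $w$ and the junction sits in the middle of a chain, you cannot simply split off $\X(w)$ and discard the transition; you must merge $u_i^* v_{i+1} u_{i+1}^*$ into a single block $w^*$ (absorbing the leftover arcs into the neighbouring $v$'s) and re-form a shorter tuple, which decreases $m$ and so terminates. Your phrase ``contract such degenerate transitions'' points in this direction, but the merge is the actual operation. With those two clarifications the argument is complete.
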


Intuitively, the configurations of a countable sofic shift consist of long periodic areas, with `disturbances' of bounded length in between. The traditional application of symbolic dynamics is the encoding of information in a restricted medium, and from this viewpoint, countable sofic shifts are extremely restricted, as the asymptotic amount of information per coordinate in a configuration is zero.

\begin{definition}
Let $X \subset A^\Z$ be a countable sofic shift, and let $T$ be a finite set of tuples over $A^*$ such that $X = \bigcup_{t \in T} \X(t)$ and the conditions of Lemma~\ref{lem:Repr} hold. Then the set $T$ is called a \emph{combinatorial representation} of $X$.
\end{definition}

We remark that a nonempty countable sofic shift has infinitely many different combinatorial representations. Using the notation of Lemma~\ref{lem:Repr}, we write $n(u_0, \ldots, u_m, v_1, \ldots, v_m) = m$. The following lemma relates the rank of a countable sofic shift to its combinatorial representation.

\begin{lemma}
\label{lem:CombiRank}
Let $X \subset A^\Z$ be a nonempty countable sofic shift with the combinatorial representation $T$. Then
\begin{equation*}
\rank(X) = 1 + \max \{ n(t) \;|\; t \in T \}.
\end{equation*}
\end{lemma}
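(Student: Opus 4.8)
The plan is to first reduce the computation to a single term of the representation, and then determine the iterated Cantor--Bendixson derivatives of that term explicitly. For the reduction, I would note that for any two subsets $A, B$ of a topological space one has $(A \cup B)' = A' \cup B'$: a point fails to be isolated in $A \cup B$ exactly when it fails to be isolated in at least one of $A$ and $B$. Since by Lemma~\ref{lem:FiniteRank} all ranks involved are finite, a straightforward induction on $k \in \N$ gives $(A \cup B)^{(k)} = A^{(k)} \cup B^{(k)}$, and hence $\rank(A \cup B) = \max(\rank(A), \rank(B))$. Applying this to $X = \bigcup_{t \in T} \X(t)$ reduces the claim to showing $\rank(\X(t)) = n(t) + 1$ for each individual tuple $t$.

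Next I would fix $t = (u_0, \ldots, u_m, v_1, \ldots, v_m)$ with $m = n(t)$ and describe the configurations of $\X(t) = \B^{-1}(u_0^* v_1 u_1^* \cdots v_m u_m^*)$. Reading the regular expression as a finite automaton with states $0, 1, \ldots, m$, self-loops spelling $u_i$ at state $i$, and edges spelling $v_{i+1}$ from state $i$ to state $i+1$, every configuration $x \in \X(t)$ corresponds to a bi-infinite walk whose state sequence is nondecreasing and eventually constant in both directions; thus $x$ is left asymptotic to $\INF u_i$ and right asymptotic to $u_j \INF$ for a unique pair $i \le j$, and I define its number of transitions to be $\tau(x) = j - i \in \{0, 1, \ldots, m\}$. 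The crucial point here is that the aperiodicity of each $\INF u_i v_{i+1} u_{i+1} \INF$ forces this decomposition to be unique and rigid: a finite window containing a transition $v_{i+1}$ genuinely breaks the periodicity of the surrounding $u_i$- and $u_{i+1}$-blocks, so the transitions of $x$ and their locations are unambiguously determined by $x$ and cannot be confused with a longer periodic stretch.

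With this in hand I would prove by induction on $k$ that $\X(t)^{(k)} = \{x \in \X(t) \mid \tau(x) \le m - k\}$. The case $k = 0$ is the whole space. For the inductive step it suffices to show that, inside $\X(t)^{(k)}$, the isolated points are exactly those with $\tau(x) = m - k$. If $\tau(x) = m - k$, then a sufficiently long central window of $x$ displays all of its transitions; by rigidity any $y \in \X(t)^{(k)}$ agreeing with $x$ there has the same transitions, and since $\tau(y) \le m - k$ no further transition can occur outside the window, so the periodic tails are forced and $y = x$, proving $x$ isolated. If instead $\tau(x) < m - k \le m$, then the realized phase interval is a proper subinterval of $[0, m]$, so there is room to append one more transition at an arbitrarily large distance on the left or right; the resulting configurations $y_N$ satisfy $\tau(y_N) = \tau(x) + 1 \le m - k$, lie in $\X(t)^{(k)}$, differ from $x$, and converge to $x$, so $x$ is not isolated. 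This yields the formula, whence $\X(t)^{(m)}$ is the finite set of periodic orbits of the points $\INF u_i \INF$ (those with $\tau = 0$), which is discrete and so has empty derivative; therefore $\rank(\X(t)) = m + 1 = n(t) + 1$, and combined with the reduction above this proves the lemma.

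I expect the main obstacle to be the rigidity statement underlying the well-definedness of $\tau$ and the isolated-point argument: one must use the aperiodicity hypothesis to guarantee that transitions are detectable in bounded windows and that periodic tails admit no hidden defects, so that agreement on a large window genuinely forces equality of configurations. Once this combinatorial rigidity is established, both directions of the inductive dichotomy, and hence the rank computation, follow routinely.
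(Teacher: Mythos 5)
Your proof is correct and follows essentially the same route as the paper's: reduce to a single term via $(A \cup B)' = A' \cup B'$, then compute the iterated derivatives of $\X(t)$ by identifying the isolated points as the configurations realizing the maximal number of available transitions. The paper packages this as the recursion $\X(u_0,\ldots,u_m,v_1,\ldots,v_m)' = \X(u_1,\ldots,u_m,v_2,\ldots,v_m) \cup \X(u_0,\ldots,u_{m-1},v_1,\ldots,v_{m-1})$, stated without proof; your explicit formula $\X(t)^{(k)} = \{x \mid \tau(x) \le m-k\}$ simply unrolls that recursion, and the rigidity argument you sketch (that aperiodicity of $\INF u_i v_{i+1} u_{i+1} \INF$ makes transitions detectable in bounded windows) is precisely the content the paper leaves as ``not hard to show.''
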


\begin{proof}
First, we need to show that if $X, Y \subset A^\Z$ are shift spaces with ranks $m, n \in \N$, respectively, then the (not necessarily disjoint) union $X \cup Y$ has Cantor-Bendixson rank $\max\{m, n\}$. This follows directly from the well-known property $(X \cup Y)' = X' \cup Y'$ of the derivative operator.

From this, we obtain by induction that the Cantor-Bendixson rank of a finite union of finite-rank shift spaces is just the maximal rank of the components. It is then enough to show that the Cantor-Bendixson rank of a shift space of the form $\X(u_0, \ldots, u_m, v_1, \ldots, v_m)$ is precisely $m + 1$, and we proceed by induction. It is clear that the rank is $1$ if $m = 0$, since the shift space is finite but nonempty. On the other hand, it is not hard to show that
\begin{align*} \X(u_0, \ldots, u_m, v_1, \ldots, v_m)' = \; &\X(u_1, \ldots, u_m, v_2, \ldots, v_m) \;\cup \\
&\X(u_0, \ldots, u_{m-1}, v_1, \ldots, v_{m-1}), \end{align*}
from which the claim follows. \qed
\end{proof}

We state some well-known characterizations of finite shift spaces, and then list some characterizations of the rank $2$ case.

\begin{corollary}
The following are equivalent for a nonempty shift space $X$:
\begin{itemize}
\item $X$ contains only periodic points,
\item $X$ is finite,
\item $X$ has rank $1$,
\item $X$ is a countable SFT (and/or sofic shift) of rank $1$,
\item $X$ is a finite union of shift spaces of the form $\X(u) = \B^{-1}(u^*)$.
\end{itemize}
\end{corollary}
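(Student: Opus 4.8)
The plan is to prove the five conditions equivalent by a cycle of implications organized around finiteness, treating ``$X$ is finite'' as the hub. Most directions are elementary, and I expect the only genuinely delicate point to be the passage from the rank condition back to finiteness.

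First I would dispatch the easy directions. If $X$ is finite, then it is a finite Hausdorff (hence discrete) space, so every point is isolated and $X' = \emptyset$; moreover the orbit $\Orb_\sigma(x)$ of any $x \in X$ is a finite $\sigma$-invariant set, forcing $\sigma^p(x) = x$ for some $p$, so every point is periodic. For the representation, a finite shift space is a finite union of periodic orbits, and the orbit of $x = \INF u \INF$ with $u = x_{[0,p-1]}$ is exactly $\X(u) = \B^{-1}(u^*)$; conversely each $\X(u)$ is the finite orbit of $\INF u \INF$, and a finite union of finite sets is finite, so being finite is equivalent to being a finite union of the $\X(u)$. Finally, a finite shift space is trivially an SFT (it is cut out by forbidding the finitely many length-$n_0$ words not in $\B_{n_0}(X)$ for large enough $n_0$), hence sofic and countable, and weakening ``countable SFT/sofic of rank $1$'' to ``rank $1$'' is immediate.

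The remaining work is to show that ``only periodic points'' and ``rank $1$'' each return us to finiteness. For only periodic points $\Rightarrow$ finite I would use the standard Morse--Hedlund dichotomy: assuming $X$ infinite for contradiction, points of arbitrarily large least period must occur, since those of period dividing $P$ form the finite set $\{x : \sigma^P(x) = x\}$; the primitive period words then make the complexity $|\B_n(X)|$ unbounded, and an unbounded-complexity subshift necessarily contains an aperiodic point, contradicting the hypothesis. For rank $1 \Rightarrow$ finite I would show that the Cantor--Bendixson derivative $X'$ is empty, whence $X$ is a compact discrete space and therefore finite.

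The main obstacle is exactly this last step, because rank $1$ only asserts $X^{(1)} = X^{(2)}$, which a priori permits a nonempty perfect kernel $X' = X''$; for instance, a full shift with one isolated periodic orbit adjoined has rank $1$ yet is infinite. The clean way around this is to invoke the characterization recorded in the Definitions section, that a shift space is countable if and only if $X^{(\rank(X))} = \emptyset$: once the other conditions have placed us among countable shifts, $\rank(X) = 1$ forces $X^{(1)} = X^{(\rank(X))} = \emptyset$, so $X' = \emptyset$ and $X$ is finite. I would therefore thread the cycle so that countability is in hand before the rank hypothesis is used, since this is the one place where care is genuinely needed.
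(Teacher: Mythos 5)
The paper supplies no proof of this corollary (it is presented as a list of well-known facts, the last bullet being the $m=0$ case of Lemma~\ref{lem:CombiRank}), so you are filling in folklore, and your architecture with finiteness as the hub is sound; the easy implications you list are all correct. Your most valuable observation is the counterexample in the last paragraph, and you should trust it: with the paper's definition of rank (least $\lambda$ with $X^{(\lambda)}=X^{(\lambda+1)}$), the shift space $\{0,1\}^\Z \cup \Orb_\sigma(\INF 2\INF) \subset \{0,1,2\}^\Z$ has $X'=X''=\{0,1\}^\Z \neq X$, hence rank $1$, yet is uncountable. So the third bullet, read literally, is \emph{not} equivalent to the others; it should say ``countable shift space of rank $1$,'' exactly parallel to the third bullet of Corollary~\ref{cor:Rank2}, which does include the word ``countable.'' Your proposed workaround --- only invoking the rank hypothesis after countability is in hand --- is the correct repair, but be explicit that it proves a corrected statement: your cycle then establishes that the other four conditions are mutually equivalent and each implies rank $1$, while the implication from bare rank $1$ back to the rest is simply false, not merely delicate. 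You have found an imprecision in the statement, not a gap you can argue around.

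The one implication with genuine mathematical content is ``only periodic points $\Rightarrow$ finite,'' and here your justification is circular as written. The Morse--Hedlund dichotomy gives ``$|\B_n(X)|\le n$ for some $n$ iff $X$ is finite,'' so ``unbounded complexity'' is merely a restatement of ``infinite,'' and the fact you then cite --- that an infinite subshift must contain an aperiodic point --- \emph{is} the implication you are trying to prove. It is true and well known, but it needs its own short argument rather than an appeal to the complexity dichotomy. For instance: an infinite compact $X$ has a non-isolated point $x$, which by hypothesis is periodic of some least period $p$; points $y^k\to x$ with $y^k\neq x$ agree with $x$ on windows $[-N_k,N_k]$ with $N_k\to\infty$ but cannot be globally $p$-periodic (else $y^k=x$), so each has a first coordinate $r_k\ge N_k-p$ (say on the right) where $y^k_j\neq y^k_{j+p}$; recentering at $r_k$ and passing to a limit yields $z\in X$ with $z_i=z_{i+p}$ for all $i<0$ but $z_0\neq z_p$, and the Fine--Wilf theorem shows such a $z$ can have no period at all. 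Adding this (or an explicit citation for the lemma ``every infinite subshift contains an aperiodic point'') closes the only real gap in your proof of the amended statement.
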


\begin{corollary}
\label{cor:Rank2}
The following are equivalent for an infinite shift space $X$:
\begin{itemize}
\item $X$ is a countable sofic shift of rank $2$,
\item $X$ is a finite union of shift spaces of the form $\X(u_0, u_1, v) = \B^{-1}(u_0^* v u_1^*)$
 (where $\INF u_0 v u_1 \INF$ may or may not be periodic),
\item $X$ is a countable shift space of rank $2$,
\item every configuration in $X$ is either periodic or isolated.
\end{itemize}
\end{corollary}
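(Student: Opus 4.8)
The plan is to prove the four conditions equivalent via the cycle $(1)\Rightarrow(2)\Rightarrow(3)\Rightarrow(4)\Rightarrow(1)$, where the first three implications are bookkeeping on top of Lemmas~\ref{lem:Repr} and~\ref{lem:CombiRank}, and only the last carries genuine content. For $(1)\Rightarrow(2)$, I would take a combinatorial representation $T$ of the countable sofic shift $X$ from Lemma~\ref{lem:Repr}; since $\rank(X)=2$, Lemma~\ref{lem:CombiRank} forces $n(t)\le 1$ for every $t\in T$. The components with $n(t)=1$ are already of the form $\X(u_0,u_1,v)$, and each component with $n(t)=0$, namely $\X(u_0)=\B^{-1}(u_0^*)$, can be rewritten as $\X(u_0,u_0,\epsilon)=\B^{-1}(u_0^* u_0^*)$, which is of the required form with periodic junction $\INF u_0\INF$. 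For $(2)\Rightarrow(3)$, I would analyze a single block $\X(u_0,u_1,v)=\B^{-1}(u_0^* v u_1^*)$: every word of $u_0^* v u_1^*$ occurs in the single configuration $z=\INF u_0 v u_1\INF$, so $\X(u_0,u_1,v)\subseteq\overline{\Orb_\sigma(z)}$. When $z$ is periodic this forces the block to be finite (rank $1$); when $z$ is aperiodic the singleton $\{(u_0,u_1,v)\}$ is a combinatorial representation, so the block is countable of rank $2$ by Lemma~\ref{lem:CombiRank}. A finite union of such blocks is therefore countable and has rank $\max=2$, since $X$ is infinite and hence at least one block is infinite.

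The implication $(3)\Rightarrow(4)$ is purely topological. As $X$ is countable of rank $2$, we have $X^{(2)}=\emptyset$, so the derived shift space $X'$ satisfies $(X')'=\emptyset$; being compact with no non-isolated points, it is finite, and a finite shift space consists solely of periodic points. Since $X'$ is exactly $X$ minus its isolated points, every configuration of $X$ is either isolated or lies in $X'$ and is thus periodic.

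The substantial direction is $(4)\Rightarrow(1)$, where I must recover the explicit form (2) --- and in particular soficness --- from the purely topological hypothesis. Running the previous argument in reverse shows first that $X'$ is a finite set of periodic configurations (the non-isolated points are periodic by hypothesis, and a shift space of periodic points is finite by the characterization of finite shift spaces established above), so $X$ is countable of rank $2$. The heart of the proof is a structure theorem for the isolated points. I would show, by compactness, that every isolated configuration $x$ is both left- and right-asymptotic to orbits inside $X'$: the limit sets of $x$ under $\sigma$ and $\sigma^{-1}$ lie in $X'$, and if the right tail of $x$ failed to be eventually periodic, then recentering the infinitely many defects of that tail and passing to a convergent subsequence would produce a non-isolated \emph{aperiodic} configuration, i.e.\ an aperiodic point of $X'$, a contradiction. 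Consequently each aperiodic isolated point has the shape $\INF w_i\,v\,w_j\INF$ for rotations $w_i,w_j$ of the finitely many period words occurring in $X'$. A second compactness argument bounds the transition words $v$, since infinitely many distinct transitions between a fixed pair of orbits would again accumulate to an aperiodic point of $X'$. Hence only finitely many blocks $\X(u_0,u_1,v)$ are needed, together with the finitely many isolated periodic orbits, giving condition (2); soficness and rank $2$ then follow from Lemmas~\ref{lem:Repr} and~\ref{lem:CombiRank} exactly as in $(2)\Rightarrow(3)$, closing the cycle.

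The hard part will be precisely this structure theorem inside $(4)\Rightarrow(1)$: upgrading the finiteness of $X'$ to global finiteness of the combinatorial data, namely finitely many periodic orbits and finitely many asymptotic transition types. The delicate point is to rule out, using only compactness and the hypothesis that non-isolated points are periodic, the accumulation of defects or of lengthening transitions that would manufacture an aperiodic non-isolated configuration. Every other implication is routine once Lemmas~\ref{lem:Repr} and~\ref{lem:CombiRank} are in hand.
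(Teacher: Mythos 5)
Your proposal is correct in outline and its easy implications coincide with the paper's: both reduce the equivalence of the first two conditions to Lemma~\ref{lem:CombiRank}, and the equivalence of the last two to the definition of rank together with the rank-$1$ corollary. The only substantive step in either argument is recovering the combinatorial form from the topological hypothesis, and here your route differs from the paper's in one place. The paper first invokes Lemma~2.6 of \cite{BaJe13} to conclude that $X \setminus X'$ consists of finitely many orbits, and only then proves that each isolated aperiodic point has periodic tails, using an ``isolating word'' $w^{(i)}$ for each orbit to show that the closure of the right tail of $x^i$ avoids all the $w^{(j)}$ and hence sits inside the finite periodic part $Y$ (eventual periodicity then follows from the common-period argument in the paper's footnote, which is essentially your first compactness argument). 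You reverse the order: you first get periodic tails by recentering at defects, and then obtain finiteness of the number of transition types by a second compactness argument in place of the citation. Your version is self-contained where the paper delegates to an external lemma, at the cost of having to carry out the second accumulation argument explicitly; both are left at the level of a sketch, as is the paper's own proof.

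One imprecision worth fixing: the accumulation points you manufacture need not be \emph{aperiodic}. If $p$ is the least common multiple of the periods of the finitely many points of $X'$, recentering at positions where the tail of $x$ violates $p$-periodicity and passing to a convergent subsequence yields a point $y$ that is a limit of distinct configurations, hence lies in $X'$, but fails to be $p$-periodic --- it could still be periodic with some other period. The contradiction is with the common period of $X'$, not with periodicity as such. Similarly, in the second compactness argument the limit of transitions with unbounded defect windows is a point of $X'$ that is left-asymptotic to a fixed periodic point $\INF u \INF$ yet disagrees with it at the first defect position; since a periodic point left-asymptotic to $\INF u \INF$ must lie in $\Orb_\sigma(\INF u \INF)$, this is again a contradiction, but not via aperiodicity of the limit. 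Both repairs are routine and do not affect the viability of your plan.
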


\begin{proof}
The first and second conditions are equivalent by Lemma~\ref{lem:CombiRank}. The third and fourth are equivalent by the definition of rank and the previous corollary, and the first trivially implies the third.

We give a proof sketch for the fact that a countable shift space of rank $2$ satisfies the second condition, which concludes the proof. If $X \subset A^\Z$ is such a shift space, then $X'$ has rank $1$, and is thus an SFT by the previous corollary. By Lemma 2.6 in \cite{BaJe13}, $X \setminus X'$ then consists of finitely many orbits. Then, $X = \Orb_\sigma(x^1) \cup \cdots \Orb_\sigma(x^k) \cup Y$, where $Y \subset A^\Z$ is a finite shift space of periodic configurations, none of the configurations $x^i \in X$ are periodic, and the orbits $\Orb_\sigma(x_i)$ are pairwise disjoint. Since every $x^i$ is isolated in $X$ (as it is not in $X'$), there exists a word $w^{(i)} \in A^*$ which occurs in $x_i$ at exactly one position, and occurs in none of the $x_j$ for $j \neq i$. Now, note that the shift space
\[ X_i^R = \overline{\{\sigma^n(x^i) \;|\; n \in \N\}} \]
(the shift space generated by the right tail of $x^i$) is a subset of $X$ and does not contain any of the isolating patterns $w^{(j)}$ for $j \in [1, k]$. It is then a subset of $Y$, which implies that it is actually the orbit of a single periodic configuration, and $x^i$ then in fact has a periodic right tail.\footnote{There is a common period $p \in \N$ for the configurations in $Y$, and if this period breaks infinitely many times in the right tail of $x$, then $X_i^R$ is not contained in $Y$.} Similarly, $x^i$ has a periodic left tail, from which the claim follows. \qed
\end{proof}

\section{Structure Graphs}

Every SFT is conjugate to an edge shift, and if the graphs defining two edge shifts are isomorphic, then the SFTs are conjugate. It is clearly $\GI$-complete, in general, to check whether two SFTs are conjugate in this particular way. We show that even in the rank $2$ case, two SFTs can be conjugate even though the graphs defining their edge shifts are not isomorphic.\footnote{From Proposition~\ref{prop:Rank1}, one can extract that in the rank $1$ case, conjugacy of edge shifts \emph{is} equivalent to the graphs defining them being isomorphic.}

\begin{example}
\label{ex:ConjugateEdgeShifts}
Let $X$ be the edge shift of the directed graph
\begin{tikzpicture}[baseline=.3em]
\node[anchor=south] (a) at (0,0) {$a$};
\node[anchor=south] (b) at (.75,0) {$b$};
\path
  (a) edge[->,loop left] (a)
  (a) edge[->] ($ (a) + (.55,0) $)
  (b) edge[->,loop right] (b);
\end{tikzpicture},
and let $Y$ be that of
\begin{tikzpicture}[baseline=.3em]
\node[anchor=south] (a) at (0,0) {$a$};
\node[anchor=south] (b) at (.75,0) {$b$};
\node[anchor=south] (c) at (1.5,0) {$c$};
\path
  (a) edge[->,loop left] (a)
  (a) edge[->] ($ (a) + (.55,0) $)
  (c) edge[<-] ($ (c) - (.55,0) $)
  (c) edge[->,loop right] (c);
\end{tikzpicture}.
The graphs are not isomorphic, but the edge shifts are easily seen to be conjugate by the block map $f : X \to Y$ defined by
\[ f(x)_i = \left\{\begin{array}{ll}
(a, a), & \mbox{if } x_i = (a, a), \\
(a, b), & \mbox{if } x_i = (a, b), \\
(b, c), & \mbox{if } x_{i-1} = (a, b), \\
(c, c), & \mbox{if } x_{i-1} = (b, b). \\
\end{array}\right. \]
\end{example}

We now define the structure graph of a rank $2$ countable sofic shift, which is a certain labeled directed graph. Corollary~\ref{cor:Iso} shows that this graph is canonical up to renaming the vertices.

\begin{definition}
Let $X$ be a rank $2$ countable sofic shift. Define the labeled directed graph $G(X) = (V, E, \pi)$ as follows. First, $V = \Per(X)$ is the set of $\sigma$-periodic points of $X$. For all $x \in \Per(X)$, add an edge $x \to \sigma(x)$ into $E$ with the label $\orb$, called a \emph{rotation edge}. Then, for each pair of configurations $x, y \in \Per(X)$ such that the set $C_{(x, y)} = \{ z \in X \;|\; \exists n \in \N : z_{(-\infty,-n]} = x_{(-\infty,-n]}, z_{[n,\infty)} = y_{[n,\infty)} \}$ is nonempty, add an edge $e = (x, y)$ into $E$ with the label $\pi(e) = |\{ \Orb_\sigma(z) \;|\; z \in C_e \}|$, called a \emph{transition edge}. We call $G(X)$ the \emph{structure graph} of $X$.

A \emph{homomorphism} between two structure graphs $G(X)$ and $G(Y)$ is a graph homomorphism $\tau : G(X) \to G(Y)$ that satisfies
\[ \tau^{-1}(\pi_{G(Y)}^{-1}(\orb)) = \pi_{G(X)}^{-1}(\orb), \]
that is, $\tau$ respects the property of being a rotation edge. A bijective homomorphism is called an \emph{isomorphism}.
\end{definition}

Note that the structure graph is finite and transition edges have finite labels, as there are finitely many $\sigma$-orbits in a rank $2$ countable sofic shift. Also, the inverse function of an isomorphism of structure graphs is itself an isomorphism.

\begin{example}
\label{ex:Graph}
Let $X = \B^{-1}(0^* 1 0^* + 0^* (1 2)^* + 0^* (1 3)^* + (1 2)^* (1 + 2) (1 3)^*)$, a rank $2$ countable sofic shift. Then $\Per(X) = \{ \INF 0 \INF, \INF (1 2) \INF, \INF (2 1) \INF, \INF (1 3) \INF, \INF (3 1) \INF \}$, and the structure graph $G(X)$ is the one depicted in Figure~\ref{fig:ExampleGraph}.

\begin{figure}
\begin{center}
\begin{tikzpicture}

\node (n0) at (-90:1) {$\INF 0 \INF$};
\node (n12) at (135:1.5) {$\INF (1 2) \INF$};
\node (n21) at (135:3) {$\INF (2 1) \INF$};
\node (n13) at (45:3) {$\INF (1 3) \INF$};
\node (n31) at (45:1.5) {$\INF (3 1) \INF$};

\draw[->] (n0) edge [loop above] node {$\orb$} (n0);
\draw[->] (n12) edge [bend left] node [right] {$\orb$} (n21);
\draw[->] (n21) edge [bend left] node [above right=-0.1] {$\orb$} (n12);
\draw[->] (n13) edge [bend left] node [left] {$\orb$} (n31);
\draw[->] (n31) edge [bend left] node [above left=-0.1] {$\orb$} (n13);

\draw[->] (n0) edge [loop below] node [pos=.2,right] {$2$} (n0);
\draw[->] (n12) edge [loop, out=330, in=300, looseness=4] node [right] {$1$} (n12);
\draw[->] (n21) edge [loop, out=150, in= 120, looseness=4] node [left] {$1$} (n21);
\draw[->] (n13) edge [loop, out=60, in=30, looseness=4] node [right] {$1$} (n13);
\draw[->] (n31) edge [loop, out=240, in=210, looseness=4] node [left] {$1$} (n31);

\draw [->] (n0) edge [bend left] node [left] {$1$} (n12);
\draw [->] (n0) edge [bend left=55] node [left] {$1$} (n21);
\draw [->] (n0) edge [bend right=55] node [right] {$1$} (n13);
\draw [->] (n0) edge [bend right] node [right] {$1$} (n31);
\draw [->] (n12) edge [bend left] node [above] {$2$} (n31);
\draw [->] (n21) edge [bend left] node [below] {$2$} (n13);

\end{tikzpicture}
\end{center}
\caption{The structure graph of the countable sofic shift $X$ of Example~\ref{ex:Graph}.}
\label{fig:ExampleGraph}
\end{figure}
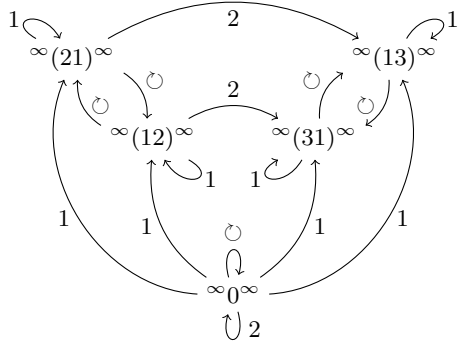
\end{example}

Next, we show that the structure graph is functorial: block maps between two shift spaces correspond to homomorphisms between their structure graphs.

\begin{proposition}
\label{prop:Functor}
For every block map $f : X \to Y$ between rank $2$ countable sofic shifts, there exists a homomorphism $G(f) : G(X) \to G(Y)$ between their structure graphs such that $G(\ID_X) = \ID_{G(X)}$ for all $X$, and $G(g \circ h) = G(g) \circ G(h)$ for all $g : Y \to Z$ and $h : X \to Y$.
\end{proposition}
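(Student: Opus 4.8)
The plan is to read off $G(f)$ directly from $f$, exploiting the fact that a block map restricts to a map on periodic points. First I would observe that since $f$ commutes with the shift, any $x \in \Per(X)$ with $\sigma^p(x) = x$ is sent to a periodic point, because $\sigma^p(f(x)) = f(\sigma^p(x)) = f(x)$. This yields a vertex map $\tau : \Per(X) \to \Per(Y)$ given by $\tau(x) = f(x)$, which I take to be $G(f)$ on vertices. I would then extend $\tau$ to edges by sending the rotation edge $x \to \sigma(x)$ to the rotation edge $f(x) \to \sigma(f(x))$ (legitimate since $f(\sigma(x)) = \sigma(f(x))$ and every periodic point of $Y$ carries a rotation edge), and by sending the transition edge $(x,y)$ to the transition edge at $(f(x), f(y))$.

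The one substantive step is to verify that $(f(x), f(y))$ is again a transition edge, i.e.\ that $C_{(f(x),f(y))} \neq \emptyset$ whenever $C_{(x,y)} \neq \emptyset$. I would prove the stronger statement $f(C_{(x,y)}) \subseteq C_{(f(x),f(y))}$ using the finite radius of $f$: if $r$ is a radius of $f$ and $z \in C_{(x,y)}$ satisfies $z_{(-\infty,-n]} = x_{(-\infty,-n]}$ and $z_{[n,\infty)} = y_{[n,\infty)}$, then for $i \leq -n-r$ the window $z_{[i-r,i+r]}$ coincides with $x_{[i-r,i+r]}$, so $f(z)_i = f(x)_i$, and symmetrically $f(z)_i = f(y)_i$ for $i \geq n+r$. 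Thus $f(z)$ is left asymptotic to $f(x)$ and right asymptotic to $f(y)$, witnessing nonemptiness of $C_{(f(x),f(y))}$. This locality argument is the main (though mild) obstacle, and the only place where the finite radius of $f$ is used. Together with the rotation-edge computation, it shows that $\tau$ maps rotation edges to rotation edges and transition edges to transition edges; since the labels $\orb$ and the numeric labels partition the edge set, the identity $\tau^{-1}(\pi_{G(Y)}^{-1}(\orb)) = \pi_{G(X)}^{-1}(\orb)$ follows, so $\tau$ is a structure-graph homomorphism. It is worth stressing here that homomorphisms are \emph{not} required to preserve the numeric labels of transition edges: a block map may merge orbits and alter the orbit count, and this is exactly why the definition only asks that rotation edges be respected.

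Finally, functoriality is immediate once one notes that the construction depends only on the restriction of $f$ to periodic points. The identity $\ID_X$ restricts to the identity on $\Per(X)$, so its vertex map, and hence its induced edge map, is the identity, giving $G(\ID_X) = \ID_{G(X)}$. For a composition $g \circ h$, the restriction to periodic points factors as $(g \circ h)|_{\Per(X)} = g|_{\Per(Y)} \circ h|_{\Per(X)}$, so the vertex maps compose; and because the edge map is fully determined by the vertex map together with the rotation/transition dichotomy (each vertex has a unique outgoing rotation edge and at most one transition edge to any given target), the edge maps compose as well, yielding $G(g \circ h) = G(g) \circ G(h)$.
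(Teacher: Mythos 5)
Your proof is correct and follows essentially the same route as the paper's: define $G(f)$ on vertices via the restriction of $f$ to periodic points, send rotation edges to rotation edges using shift-commutation, and send the transition edge $(x,y)$ to $(f(x),f(y))$ after checking $C_{(f(x),f(y))} \neq \emptyset$. In fact you supply the one detail the paper leaves implicit, namely the finite-radius argument showing $f(C_{(x,y)}) \subseteq C_{(f(x),f(y))}$, so nothing further is needed.
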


\begin{proof}
Let $f : X \to Y$ be as stated, and define $G(f) : G(X) \to G(Y)$ as follows. For each $x \in \Per(X)$, let $G(f)(x) = f(x) \in \Per(Y) = \V(G(Y))$. Then for any rotation edge $e : x \to \sigma(x)$ in $\E(G(X))$, there exists a rotation edge $e' : f(x) \to \sigma(f(x))$ in $\E(G(Y))$, so let $G(f)(e) = e'$. Finally, for any transition edge $e : x \to y$ with label $k > 0$, the set $C_{(x, y)}$ is nonempty, which implies that $C_{(f(x), f(y))}$ is also nonempty. Thus $e' = (f(x), f(y))$ is a transition edge of $G(Y)$ with some label $\ell > 0$, and we again let $G(f)(e) = e'$.

It is easy to see that $G(f)$ is a homomorphism between the structure graphs, and that $G(\ID_X) = \ID_{G(X)}$ holds. Proving the equation $G(g \circ h) = G(g) \circ G(h)$ is simply a matter of expanding the definitions.
\qed
\end{proof}

The operation $G$ that sends block maps to structure graph homomorphisms preserves injectivity and surjectivity in the following sense.

\begin{lemma}
\label{lem:InjSurj}
A homomorphism $\tau : G(X) \to G(Y)$ of structure graphs
\begin{enumerate}
\item always has a $G$-preimage,
\item has an injective $G$-preimage if and only if it is edge-injective, and every transition edge $e$ of $G(X)$ satisfies $\pi(e) \leq \pi(\tau(e))$,
\item has a surjective $G$-preimage if and only if it is edge-surjective, and every transition edge $e$ of $G(Y)$ satisfies $\sum_{\tau(e') = e} \pi(e') \geq \pi(e)$,
\item has a bijective $G$-preimage if and only if it is edge-bijective and preserves the labels of transition edges.
\end{enumerate}
\end{lemma}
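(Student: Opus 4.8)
The plan is to prove the four statements in the order $(1)$, then $(2)$ and $(3)$ in parallel, and finally $(4)$ as their combination. The guiding principle is that the structure graph records exactly the data a block map can act on: by Corollary~\ref{cor:Rank2} and Lemma~\ref{lem:Repr}, a rank $2$ countable sofic shift $X$ is the disjoint union of its finitely many periodic orbits (the rotation cycles of $G(X)$) together with finitely many orbits of nonperiodic configurations, each lying in a unique set $C_{(x,y)}$ with $x,y \in \Per(X)$ (the transition edges of $G(X)$). A block map $f$ sends periodic points to periodic points and, being continuous and shift-commuting, sends a nonperiodic $z \in C_{(x,y)}$ into $C_{(f(x),f(y))}$; thus $f$ induces $\tau = G(f)$ on vertices and an orbit map on each transition edge's fibre. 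The whole proof amounts to reconstructing such orbit maps from the combinatorial data of $\tau$.

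For $(1)$, given an arbitrary homomorphism $\tau : G(X) \to G(Y)$, I would build a block map $f$ with $G(f) = \tau$ as follows. On each periodic point $x$ set $f(x) = \tau(x)$; since $\tau$ respects rotation edges, the least period of $\tau(x)$ divides that of $x$, so this is compatible with shift-commutation. For each transition edge $e = (x,y)$ of $G(X)$, the edge $\tau(e)$ is a transition edge of $G(Y)$, so $C_{(\tau(x),\tau(y))}$ is nonempty, and I would map every orbit of $C_{(x,y)}$ into one fixed orbit of $C_{(\tau(x),\tau(y))}$. The key point, and the main obstacle of the whole lemma, is to realize this assignment by an honest local rule. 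Because $X$ has rank $2$, every configuration consists of periodic tails with a disturbance of bounded length in the middle; choosing the radius $r$ larger than all periods and all disturbance cores, each window $x_{[i-r,i+r]}$ determines which periodic orbit (and phase) or which transition core the coordinate $i$ belongs to, and one outputs the corresponding symbol of the prescribed image configuration. One must check that this rule is well defined, that distinct periodic patterns have distinct long factors by a Fine--Wilf argument, that on overlaps the periodic and transitional prescriptions agree, and that the resulting configurations genuinely lie in $Y$; granting this, $f$ is a block map and $G(f) = \tau$ by construction.

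For $(2)$ and $(3)$ I would analyze the two kinds of edges separately. The rotation edges of $G(X)$ are in bijection with vertices (each $x$ has the unique rotation edge $x \to \sigma(x)$), and $\tau$ wraps the rotation cycle of a period-$p$ point onto a rotation cycle of some period $q \mid p$; hence $\tau$ is edge-injective on rotation edges iff $q = p$ for all orbits and distinct orbits have distinct images, which is exactly injectivity of $f$ on $\Per(X)$, and dually edge-surjectivity on rotation edges is surjectivity onto $\Per(Y)$. For transition edges, an injective $f$ must send nonperiodic configurations to nonperiodic ones (else $\sigma^p z = z$ would collapse an orbit), so it injects the $\pi(e)$ orbits of $C_e$ into the $\pi(\tau(e))$ orbits of $C_{\tau(e)}$, forcing $\pi(e) \le \pi(\tau(e))$; since distinct transition edges are determined by their endpoints, edge-injectivity prevents collisions between different fibres, and conversely, when $\tau$ is edge-injective with $\pi(e) \le \pi(\tau(e))$, one refines the construction of $(1)$ to choose the orbit maps injectively. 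The surjective case is dual: every transition edge $e$ of $G(Y)$ must be $\tau(e')$ for some $e'$ (edge-surjectivity), and the orbits of all such $C_{e'}$ must together surject onto those of $C_e$, which is exactly $\sum_{\tau(e') = e} \pi(e') \ge \pi(e)$; here I would take care of the self-loops $C_{(x,x)}$, whose orbit count includes the periodic orbit of $x$ itself, so that the periodic and nonperiodic contributions are accounted for consistently with the rotation-edge analysis.

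Finally, $(4)$ follows by combining $(2)$ and $(3)$: a bijective preimage is simultaneously injective and surjective, so $\tau$ is edge-bijective and satisfies both $\pi(e) \le \pi(\tau(e))$ and, since edge-bijectivity makes the sum in $(3)$ a single term, $\pi(\tau^{-1}(e)) \ge \pi(e)$; together these force $\pi(e) = \pi(\tau(e))$ for every transition edge, i.e.\ $\tau$ preserves transition-edge labels. Conversely, an edge-bijective, label-preserving $\tau$ meets the hypotheses of both $(2)$ and $(3)$, and the construction then produces a bijective $f$.
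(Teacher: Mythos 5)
Your proposal is correct and follows essentially the same route as the paper: necessity of the numerical conditions by counting orbits in the fibres $C_e$ under the induced orbit maps, and sufficiency by defining $f$ as $\tau$ on $\Per(X)$ and orbit-by-orbit on the aperiodic part, refined to be injective or surjective when the edge and label conditions permit. The only difference is cosmetic: where you verify that $f$ is a block map by exhibiting a large-radius local rule (with the attendant Fine--Wilf bookkeeping), the paper simply observes that the aperiodic points are isolated, so the orbitwise definition is automatically continuous and shift-commuting.
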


\begin{proof}
First, we prove that if $f : X \to Y$ is injective, then condition 2 holds for $G(f)$. The edge-injectivity of $G(f)$ follows immediately. For each transition edge $e = (v, w)$ of $G(X)$ with label $k > 0$, the function from $\{ \Orb_\sigma(y) \;|\; y \in C_e \}$ to $\{ \Orb_\sigma(z) \;|\; z \in C_{G(f)(e)} \}$ defined by $\Orb_\sigma(y) \mapsto \Orb_\sigma(f(y))$ is injective, since $f$ is. Thus we have $k \leq \pi(G(f)(e))$, and condition 3 holds.

Suppose next that $f$ is surjective, so that $G(f)$ is clearly surjective on vertices and rotation edges. For each transition edge $e = (v, w)$ of $G(Y)$ with label $k > 0$ and each configuration $y \in C_{(v, w)}$, there exists some periodic configurations $v' \in f^{-1}(v)$ and $w' \in f^{-1}(w)$, and a configuration $z \in f^{-1}(y) \cap C_{(v', w')}$. Then $(v', w')$ is an edge of $G(X)$ that $G(f)$ maps to $e$, so it is surjective on the transition edges as well. Finally, we have
\begin{align*}
k & = |\{ \Orb_\sigma(y) \;|\; y \in C_e \}|
   \leq |\{ \Orb_\sigma(z) \;|\; z \in X, f(z) \in C_e \}| \\
  & = \sum_{\substack{v', w' \in \Per(X) \\ G(f)(v', w') = e}} |\{ \Orb_\sigma(z) \;|\; z \in C_{(v', w')} \}|
   = \sum_{G(f)(e') = e} \pi(e').
\end{align*}
If $f$ is bijective, this and the previous case together show that condition 4 holds.

Finally, we construct a $G$-preimage $f : X \to Y$ for $\tau$ with the desired properties. We must of course have $f(x) = \tau(x)$ for every periodic configuration $x \in \Per(X)$, which is well-defined since $\tau(\sigma(x)) = \sigma(\tau(x))$. Let then $x \in X$ be aperiodic, and let $y, z \in \Per(X)$ be such that $x \in C_{(y, z)}$. Then $G(X)$ has a transition edge $e = (y, z)$ with some label $k > 0$. Let $e' = \tau(e) = (y', z')$. Now $e'$ has some label $\ell > 0$, so that $C_{e'}$ is also nonempty. We choose some $x' \in C_{e'}$ and define $f(x) = x'$, and extend $f$ to $\Orb_\sigma(x)$ by defining $f(\sigma^n(x)) = \sigma^n(x')$ for all $n \in \Z$. For this, note that $x$ is isolated in $X$. If condition 2 holds, there are enough orbits in $\bigcup_{e \in \tau^{-1}(e')} C_e$ to guarantee that every $x' \in C_{e'}$ can be given an $f$-preimage, and if condition 3 holds, then there are enough orbits in $C_{e'}$ to guarantee that every $x \in C_{(y, z)}$ can be given a different $f$-image.

The definition of $f$ is now complete, and it is easy to see that it is continuous and shift-invariant, thus a block map. Moreover, it follows immediately from the definition of $f$ that $G(f) = \tau$. \qed
\end{proof}

As a corollary of the above, we obtain the following.

\begin{corollary}
\label{cor:Iso}
Let $X$ and $Y$ be countable rank $2$ sofic shifts. Then $X$ and $Y$ are conjugate if and only if $G(X)$ is obtained from $G(Y)$ by renaming its vertices.
\end{corollary}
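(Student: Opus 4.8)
The plan is to read the corollary directly off functoriality (Proposition~\ref{prop:Functor}) and part~4 of Lemma~\ref{lem:InjSurj}, which together say that conjugacies are precisely the block maps whose induced structure-graph homomorphisms are edge-bijective and transition-label preserving. The only preliminary is to unpack the phrase \emph{obtained by renaming its vertices}: it means there is an isomorphism of structure graphs $\tau : G(Y) \to G(X)$, i.e.\ a vertex bijection inducing an edge bijection that preserves edge labels (and in particular the rotation/transition distinction). Since each vertex $x$ carries exactly one outgoing rotation edge $x \to \sigma(x)$, the rotation edges are forced by the vertex map, so a renaming is the same thing as an edge-bijective, transition-label-preserving homomorphism between $G(X)$ and $G(Y)$.

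For the forward implication, suppose $f : X \to Y$ is a conjugacy. I would first check that $f$ restricts to a bijection $\Per(X) \to \Per(Y)$: periodicity is preserved by any block map (if $\sigma^p(x) = x$ then $\sigma^p(f(x)) = f(x)$), and the same applies to the block map $f^{-1}$, so $f$ carries periodic points bijectively to periodic points and the vertex map of $G(f)$ is a bijection. Because $f$ is bijective, the \emph{only if} direction of part~4 of Lemma~\ref{lem:InjSurj} shows that $G(f)$ is edge-bijective and preserves transition-edge labels. Together with the rotation-edge condition built into $G(f)$ by Proposition~\ref{prop:Functor}, this makes $G(f)$ an isomorphism of structure graphs, which is exactly the assertion that $G(Y)$ is $G(X)$ with its vertices renamed.

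For the converse, suppose $\tau : G(X) \to G(Y)$ is such an isomorphism. Being edge-bijective and transition-label preserving, $\tau$ satisfies the hypotheses of the \emph{if} direction of part~4 of Lemma~\ref{lem:InjSurj}, which produces a bijective block map $f : X \to Y$ with $G(f) = \tau$. A bijective block map is a conjugacy, since its inverse is again a block map, so $X$ and $Y$ are conjugate, completing the equivalence.

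The argument is essentially bookkeeping layered on top of the two cited results, so I do not anticipate a serious obstacle. The one point deserving care is the vertex-bijection claim in the forward direction, which upgrades the edge-bijectivity of $G(f)$ to a genuine graph isomorphism; this is where the conjugacy-invariance of periodicity is used. It is also worth confirming explicitly that the edge-bijective, label-preserving homomorphism supplied (resp.\ required) by part~4 coincides with the informal notion of a vertex renaming, which again follows from the rotation edges being determined by the vertices.
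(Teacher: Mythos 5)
Your argument is correct and matches the paper's intent exactly: the paper gives no explicit proof, presenting the corollary as an immediate consequence of Proposition~\ref{prop:Functor} and part~4 of Lemma~\ref{lem:InjSurj}, which is precisely the bookkeeping you carry out (including the correct identification of ``renaming of vertices'' with an edge-bijective, transition-label-preserving isomorphism of structure graphs). No gaps.
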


\section{Complexity Classes of Conjugacy, Factoring, Embedding and Block Map Existence}

In this section, we present our results on the computational complexity of different decision problems related to countable rank $1$ and rank $2$ sofic shifts. If one is only interested in decidability, then it is irrelevant what kind of encodings we use for shift spaces $X \subset A^\Z$, since there are computable transformations between all reasonable encodings. However, for finding out the precise complexity class, there are some subtleties in how the algorithm receives the shift space as input. There are several possibilities:
\begin{enumerate}
\item Every sofic shift can be encoded by a finite list $F \subset B^*$ of forbidden words that define an SFT over the alphabet $B$, and a symbol map $f : B \to A$. For SFTs, we can take $B = A$ and $f = \ID_A$.
\item Every sofic shift is the symbolic edge shift defined by a (possibly right-resolving) labeled graph. For SFTs, up to conjugacy, we can take the symbol map to be the identity map, and obtain edge shifts given by adjacency matrices. This is the standard encoding of SFTs in the conjugacy problem.
\item Countable sofic shifts can be encoded by combinatorial representations.
\item Countable sofic shifts of rank at most $2$ can be encoded by structure graphs.
\end{enumerate}
We show that, up to polynomial-time reductions, encodings 2 and 4 are equivalent in the rank $2$ case, if we assume right-resolvingness, so it makes no difference which one we choose.

\begin{lemma}
\label{lem:Representations}
For countable sofic shifts of rank at most $2$, the representations by right-resolving symbolic edge shifts and structure graphs are equivalent up to polynomial-time reductions.
\end{lemma}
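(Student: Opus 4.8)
The plan is to establish the equivalence by exhibiting two polynomial-time transformations, one in each direction, between right-resolving symbolic edge shifts and structure graphs, for countable sofic shifts of rank at most $2$. Throughout I will rely on Corollary~\ref{cor:Rank2}, which tells me that every configuration in such a shift is either periodic or isolated, and on the fact (implicit in the definition of $G(X)$) that the structure graph has one vertex per periodic point and transition edges counting orbits of asymptotic configurations.

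First I would go from a right-resolving symbolic edge shift to a structure graph. Given a labeled right-resolving graph $H$ presenting $X$, the periodic points of $X$ are exactly the labels of simple cycles in $H$, and right-resolvingness is the key feature that keeps this extraction polynomial: from any vertex, a choice of label determines the next edge uniquely, so each periodic point corresponds to a bounded-length cyclic walk and there are only polynomially many of them (indeed at most $|\V(H)|$ up to rotation, since a right-resolving graph is essentially deterministic). For the vertices of $G(X)$ I enumerate these periodic points together with all their rotations. For the transition edges I must, for each ordered pair $(x,y)$ of periodic points, decide whether $C_{(x,y)}$ is nonempty and, if so, compute the label $|\{\Orb_\sigma(z) \mid z \in C_{(x,y)}\}|$. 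A configuration in $C_{(x,y)}$ is a bi-infinite walk that is eventually $x$ to the left and eventually $y$ to the right, so it factors through the strongly connected components: its ``middle'' is a finite walk in $H$ from a state on the $x$-cycle to a state on the $y$-cycle, taken modulo the periods of $x$ and $y$. Counting orbits thus reduces to counting such connecting walks up to the relevant shifts, which is a finite reachability-and-counting computation on the product of $H$ with the cycle structure, doable in polynomial time.

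In the reverse direction, from a structure graph $G(X)$ I construct a right-resolving symbolic edge shift presenting the same $X$. For each vertex (periodic point) I lay down a labeled cycle realizing that periodic point; the rotation edges of $G(X)$ record exactly these cyclic shift relations, so the cycles are read off directly. For each transition edge $e=(x,y)$ with label $k$ I must glue in $k$ distinct orbits of configurations asymptotic to $x$ on the left and $y$ on the right. I do this by attaching $k$ finite connecting paths of fresh labeled edges from the $x$-cycle to the $y$-cycle, choosing the labels so that the resulting graph remains right-resolving — which I can always arrange by enlarging the alphabet if necessary, since right-resolvingness is a local determinism condition and distinct outgoing edges at a vertex need only distinct labels. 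The size of this construction is polynomial in $|\V(G(X))| + \sum_e \pi(e)$, i.e.\ in the size of the structure-graph encoding.

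The main obstacle I expect is the transition-edge label computation in the forward direction: verifying that counting the orbits in $C_{(x,y)}$ is genuinely polynomial, rather than merely finite, and that the counts produced are correct and canonical. Here I must be careful that the number of distinct orbits is polynomially bounded (it is, being at most the number of orbits of aperiodic points, which is bounded by the representation) and that right-resolvingness prevents a combinatorial blowup in the number of connecting walks I have to examine. A secondary subtlety is ensuring, in the reverse construction, that gluing in the connecting paths does not inadvertently create unwanted new periodic points or additional asymptotic orbits that would change the labels of other transition edges; I would handle this by using fresh, non-reused labels on the interior of each connecting path so that no new cycle and no spurious asymptotic pair is introduced.
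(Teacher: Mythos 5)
Your overall plan (a polynomial-time translation in each direction) matches the paper's, but both directions contain genuine gaps, and the central one is a counting/encoding issue. The label $\pi(e)$ of a transition edge can be \emph{exponential} in the size of the right-resolving presentation: the paper's example following this lemma gives a graph of size $O(k)$ whose transition edge has label $2^k$. So the labels of a structure graph must be encoded in binary; your forward-direction claim that ``the number of distinct orbits is polynomially bounded'' by the representation is false, and your reverse construction --- attaching $\pi(e)$ separate connecting paths per transition edge --- has size exponential in the input. The forward direction survives because one can count transitional paths by powering a matrix $M$ obtained by deleting cycle edges (the counts are huge but have polynomially many bits); this is what the paper does. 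The reverse direction, however, needs a different gadget: the paper writes $\pi(e)=\sum_\ell 2^{k_\ell}$ in binary and realizes each summand by a connecting path containing $k_\ell$ doubled edges with distinct labels, so that $2^{k_\ell}$ distinct transitional labels are produced by a subgraph of size $O(k_\ell)$.

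The second gap is correctness of your reverse construction. If each periodic orbit is realized by a single cycle that both receives incoming connecting paths and emits outgoing ones, the graph contains bi-infinite walks that enter a cycle along one connecting path, wind around it, and leave along another; their labels are configurations with two or more disturbances that in general do not lie in $X$. For instance, for $X=\B^{-1}(a^*cb^*)\cup\B^{-1}(b^*da^*)$ your graph would accept $\INF a\,c\,b^n\,d\,a\INF$ for every $n\geq 1$ (and in fact uncountably many configurations), none of which are in $X$. Fresh labels on the interiors of the connecting paths do not help, since the offending walks exist at the level of the underlying graph. The paper avoids this by duplicating each cycle into a ``source'' copy $q_i^r$ from which transitional paths depart and a ``sink'' copy $p_i^r$ at which they arrive, with no path from a sink back to a source. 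A smaller omission: in the forward direction you should first pass to the minimal right-resolving presentation, which is what guarantees that each periodic point is carried by at most two cycles and that each cycle label has least period equal to the cycle length; without it, identifying $\Per(X)$ with the cycles and their rotations is not as immediate as you state.
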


\begin{proof}
First, let $G$ be a right-resolving labeled graph of size $n$ encoding a countable rank $2$ sofic shift $X \subset A^\Z$. We construct the structure graph of $X$, and for this, we may assume that $G$ is the minimal right-resolving representation of $X$ (which can be computed in polynomial time from a given right-resolving labeled graph, see Section~4 of \cite{LiMa95} for details). In particular, since $X$ is countable, the cycles of $G$ are disjoint, and we can enumerate them as $C_0, \ldots, C_{c-1}$, and if $(q_i^0, \ldots, q_i^{m_i-1})$ are the vertices of the cycle $C_i$, then $\sum_{i=0}^{c-1} m_i \leq n$. Also, if $u_i \in A^{m_i}$ is the label of the cycle $C_i$, then $\INF u_i^\infty$ has least period $m_i$, for otherwise we could replace $C_i$ by a shorter cycle and obtain a smaller presentation for $X$.

Call a path in $G$ \emph{transitional} if it contains no edges of any cycle. Then the length of a transitional path is at most $n$. Now, let $M \in \N^{\V(G) \times \V(G)}$ be the matrix defined by $M(q_i^r, q_i^{r+1}) = 0$ for all $i \in [0,c-1]$ and $r \in [0,m_i-1]$, and $M(v, w) = |\{ e : v \to w \;|\; e \in \E(G) \}|$ for all other vertices $v, w \in \V(G)$. Note that since $G$ is right-resolving, the edges in the above set have distinct labels. The number of transitional paths of a given length $\ell \in \N$ between two vertices $v, w \in \V(G)$ is then exactly $M^\ell(v, w)$, and their labels are also distinct.

For each periodic configuration $\INF u \INF \in X$, there are at most two cycles in $G$ with the label $u$, one with outgoing and one with incoming transitional paths, for otherwise we could replace two such cycles with a single one and obtain a smaller right-resolving representation. Now, the vertices of the structure graph $G(X)$ are the periodic configurations $\sigma^r(\INF u_i^\infty)$ for $i \in [0, c-1]$ and $r \in [0, m_i - 1]$, and its transitional edges are exactly $e : x \to y$ with labels
\[ \pi(e) = \sum_{\substack{x = \sigma^r(\INF u_i^\infty) \\ y = \sigma^s(\INF u_j^\infty)}} \sum_{\ell = 1}^n \sum_{k = 0}^{\lcm(m_i, m_j)-1} |M^\ell(q_i^{r+k}, q_j^{s+k+\ell})|, \]
plus $1$ in the case $x = y$ to account for the periodic point itself. Thus $G(X)$ can be computed from $G$ in polynomial time.

Next, we take the structure graph $G(X)$ for $X$, and construct a right-resolving labeled graph $G$ whose shift space $Y$ is conjugate to $X$. From $G(X)$ we can easily extract the periodic orbits of $X$, which we denote by $\Orb_\sigma(\INF u_i^\infty)$ for $i \in [0, c-1]$, where $u_i \in A^+$. For all $i \in [0, c-1]$ and $r \in [0, |u_i|-1]$, we add to $G$ two vertices $p_i^r$ and $q_i^r$, and two edges $e : q_i^r \to q_i^{r+1}$ and $e' : p_i^r \to p_i^{r+1}$ with the same labels $a_i^r$. The labels of the cycles of $G$, and thus the periodic points of $Y$, are thus $a_i = a_i^0 \cdots a_i^{|u_i|-1}$ for $i \in [0, c-1]$.

Let then $e : \sigma^r(\INF u_i^\infty) \to \sigma^s(\INF u_j^\infty)$ be a transitional edge in the structure graph $G(X)$, and let $\pi(e) = \sum_{\ell = 0}^{d-1} 2^{k_\ell}$ be the binary representation of its label, where $k_0 < \cdots < k_{d-1}$. For each $\ell \in [0, d-1]$, we add to $G$ the subgraph $G_e = q_i^r \to v_e^0 \rightrightarrows v_e^1 \rightrightarrows \cdots \rightrightarrows v_e^{k_\ell} \to w_e^0 \to \cdots \to w_e^p \to p_j^s$ whose length divides $|u_i|$ and $|u_j|$. Apart from the vertices $q_i^r$ and $p_j^s$, the subgraphs $G_e$ for different transitional edges $e$ are disjoint, and their edges have distinct labels. Then $G$ is a right-resolving labeled graph with exactly $\pi(e)$ transitional paths from $q_i^r$ to $p_j^s$ of length dividing $|u_i|$ and $|u_j|$. Every configuration of $Y$ which is left asymptotic to $\sigma^r(\INF a_i^\infty)$ and right asymptotic to $\sigma^s(\INF a_i^\infty)$ contains the label of one of the paths in $G_e$. This implies that $G(Y)$ is obtained from $G(X)$ by renaming each vertex $\sigma^r(\INF u_i^\infty)$ to $\sigma^r(\INF a_i^\infty)$, and then $X$ and $Y$ are conjugate by Corollary~\ref{cor:Iso}. It is clear that the construction of $G$ can be done in polynomial time. \qed
\end{proof}

We can also show a similar result for forbidden words and general symbolic edge shifts, although we omit the proof.

\begin{lemma}
\label{lem:Representations2}
For countable sofic shifts of rank at most $2$, the representations by  symbolic edge shifts and forbidden words are equivalent up to polynomial-time reductions.
\end{lemma}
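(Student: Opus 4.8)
The plan is to establish both directions of the polynomial-time equivalence between forbidden-word representations and general (not necessarily right-resolving) symbolic edge shifts, for countable sofic shifts of rank at most $2$.

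First I would handle the direction from forbidden words to symbolic edge shifts. Suppose $X \subset A^\Z$ is given by a finite list $F \subset B^*$ of forbidden words defining an SFT over $B$, together with a symbol map $f : B \to A$. Let $r = \max_{w \in F} |w|$ be the maximal length of a forbidden word. The standard higher-block construction (see Section~2 of \cite{LiMa95}) produces, in polynomial time, an edge shift whose vertices are the allowed words of $B^{r-1}$ and whose edges correspond to allowed words of length $r$; applying $f$ symbolwise to the edge labels yields a labeled directed graph $H$ whose symbolic edge shift is conjugate to $X$. The only subtlety is that the number of vertices $|B|^{r-1}$ is not polynomially bounded in general. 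However, since $X$ has rank at most $2$, by Corollary~\ref{cor:Rank2} it is a finite union of the simple spaces $\X(u_0, u_1, v)$, so its language is very sparse: only boundedly many distinct periods occur, and the transitional regions have bounded length. I would exploit this to prune the higher-block graph to the states actually reachable along bi-infinite walks, whose number is polynomial in the input size, discarding the exponentially many unreachable states. This pruning is the first point requiring care.

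Next I would treat the reverse direction, from a general symbolic edge shift to forbidden words. Given a labeled directed graph $G$ with labeling $\pi : \E(G) \to A$ presenting $X$, the sofic shift $X$ is the image of the underlying edge shift (an SFT over $\E(G)$) under the symbol map $\pi$. The edge shift is itself presented by the forbidden patterns $ee'$ where the target of $e$ differs from the source of $e'$, which is a list of quadratically many two-letter forbidden words over the alphabet $\E(G)$; together with the symbol map $\pi$ this is exactly a forbidden-word presentation in the sense of encoding~1. This direction is essentially immediate and introduces no blow-up.

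The main obstacle will be controlling the size blow-up in the forbidden-words-to-edge-shift direction, since the naive higher-block construction is exponential in the radius $r$. The rank~$2$ hypothesis is what saves us: I would argue that the set of words of $B^{r-1}$ that extend to bi-infinite configurations of $X$ is, by the combinatorial structure of Corollary~\ref{cor:Rank2}, of polynomial size, because every such word is a factor of a configuration of the form $\INF u_0 v u_1 \INF$ with $u_0, u_1, v$ of bounded length, and there are only polynomially many relevant periodic and transitional blocks. Formalizing this bound and showing the pruned graph can be computed in polynomial time (rather than merely having polynomial size) is the delicate step; the remaining verifications that the resulting symbolic edge shift equals $X$ and that the reductions compose correctly are routine expansions of the definitions, which is why the authors omit the full proof.
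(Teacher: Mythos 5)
The easy direction of your argument is fine: passing from a labeled graph to forbidden words via the quadratically many length-$2$ words $ee'$ with mismatched endpoints, together with the labeling as the symbol map, is exactly encoding~1 and costs nothing. The gap is in the other direction. Your pruning argument rests on the claim that the number of words of length $r-1$ occurring in configurations of the SFT $Y$ defined by $F$ is polynomial in the input size, ``because every such word is a factor of a configuration $\INF u_0 v u_1 \INF$ with $u_0,u_1,v$ of bounded length.'' This conflates the length of the individual blocks with their \emph{number}: a rank~$2$ countable SFT can have exponentially many isolated orbits relative to its presentation size, which is precisely the point of the example in this paper showing that combinatorial representations blow up (the shift with configurations $\INF 0 w 3 \INF$ for $2^k$ choices of the transition word $w$, presented by a graph of size $O(k)$). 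Translate that example into forbidden words (length-$2$ words listing the disallowed adjacencies over the $O(k)$-letter edge alphabet, total size $O(k^2)$) and pad $F$ with one harmless long word on an unusable dummy letter to push $r$ up to $k+2$: the resulting instance has $|\B_{r-1}(Y)| \geq 2^k$ while the input has size $O(k^2)$. So the set of reachable higher-block states is genuinely exponential, the claimed polynomial bound is false, and no care in computing the pruned graph can save this reduction. (Note also that $Y$ itself need not be countable even when $X = f(Y)$ is, so arguments about the structure of $Y$ would need separate justification in any case.)

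The fix is to abandon the higher-block construction and use an automaton whose states are not the allowed $(r-1)$-blocks but the \emph{proper prefixes of forbidden words}, in the style of Aho--Corasick. Let $P$ be the set of proper prefixes of words of $F$ (so $|P| \leq 1 + \sum_{w \in F} |w|$), and for $p \in P$, $b \in B$, let $\delta(p,b)$ be the longest suffix of $pb$ lying in $P$, declared dead if some suffix of $pb$ lies in $F$. One checks that for a bi-infinite $y \in B^\Z$, the state sequence $q_i = {}$(longest suffix of $y_{(-\infty,i]}$ in $P$) is the unique surviving bi-infinite run, and that it survives if and only if $y$ avoids $F$; hence the labeled graph on $P$ with edges $p \to \delta(p,b)$ labeled $f(b)$ presents $X = f(Y)$ and has size polynomial in the input. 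This construction is computable in polynomial time and, notably, does not use the rank~$2$ hypothesis at all. The remaining verifications you call routine (that the reductions compose) are indeed routine once this is in place.
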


It is known \cite{KaSwMa95} that computing the number of words of a given length accepted by a given nondeterministic finite automaton is complete in a complexity class known as $\mathsf{\#P}$, which contains $\NP$ and is believed to be much larger than it. Thus, under reasonable complexity assumptions, there is no polynomial-time algorithm for computing the structure graph of a given symbolic edge shift, if the input need not be right-resolving. We do not know the exact complexity of the conjugacy problem, if the inputs are given in this form. The hardness results are still valid, and the problems are all decidable in this case as well.

The next example shows that the combinatorial representation is not equivalent to the other three, since there is an exponential blowup.

\begin{example}
Any combinatorial representation of the rank $2$ countable SFT defined by the right-resolving labeled graph
\[
\begin{tikzpicture}[xscale=.75]
\node[right] (q0) at (0,0) {$q_0$};
\node[right] (q1) at (1,0) {$q_1$};
\node[right] (q2) at (2,0) {$q_2$};
\node[right] (d) at (3.2,0) {$\cdots$};
\node[right] (q3) at (4.4,0) {$q_k$};
\foreach \x in {0,1,2,3.4}{
  \node[right] () at (\x+.5,0) {$\rightrightarrows$};
  \node[right] () at (\x+.625,.25) {\scriptsize 1};
  \node[right] () at (\x+.625,-.25) {\scriptsize 2};
}
\path[->]
  (q0) edge[loop left] node[above=0.06]{\scriptsize 0} (q0)
  (q3) edge[loop right] node[above=0.06]{\scriptsize 3} (q3);
\end{tikzpicture}
\]
clearly contains at least $2^k$ terms, as each term only represents one $\sigma$-orbit.
\end{example}

In what follows, we assume that in the decision problems, all countable SFTs and sofic shifts are encoded by their structure graphs, and we will do so without explicit mention.

We first solve the case of rank 1. As one might imagine, there are fast and simple algorithms in this case.

\begin{proposition}
\label{prop:Rank1}
Conjugacy, and existence of block maps, factor maps, and embeddings between countable sofic shifts of Cantor-Bendixson rank~$1$ is in~$\Pol$.
\end{proposition}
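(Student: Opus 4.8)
The plan is to exploit the very rigid form of rank $1$ shifts. By the first corollary of the previous section, a nonempty rank $1$ shift space is exactly a finite union of orbits of periodic points, so I would start by reading off what its structure graph looks like. Its vertex set $\Per(X)$ decomposes into $\sigma$-orbits, each of which carries a cycle of rotation edges $x \to \sigma(x) \to \cdots \to x$, and since every point is periodic, the only transition edges are the self-loops $(x,x)$ with label $1$ (a point left-asymptotic and right-asymptotic to periodic points of a finite shift must lie in their common orbit). Thus $G(X)$ is a disjoint union of rotation cycles decorated with label-$1$ transition self-loops, and it is completely described by the multiset $P(X)$ of least periods of its orbits. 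This multiset can be extracted from the given structure graph in polynomial time by decomposing the rotation edges into cycles and recording their lengths.

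The key structural observation I would record next is that any rotation-respecting homomorphism $\tau : G(X) \to G(Y)$ commutes with $\sigma$ on vertices: each vertex has a unique outgoing rotation edge, so $\tau(\sigma(x)) = \sigma(\tau(x))$. Hence $\tau$ maps an orbit of period $p$ onto the orbit of $\tau(x)$, whose period $q$ necessarily divides $p$, and conversely any such divisibility assignment defines a homomorphism. All transition labels being $1$, the label constraints in Lemma~\ref{lem:InjSurj} are automatic, so every decision problem becomes a purely combinatorial condition on $P(X)$ and $P(Y)$.

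With this in hand, I would dispatch conjugacy and embedding by counting. By Corollary~\ref{cor:Iso}, $X$ and $Y$ are conjugate iff $G(X)$ and $G(Y)$ are isomorphic, which by the above happens iff $P(X) = P(Y)$ as multisets; this is a direct comparison. For embeddings I would invoke Lemma~\ref{lem:InjSurj}(2): edge-injectivity of $\tau$ on a period-$p$ cycle mapped onto a period-$q$ cycle forces $p/q = 1$, and distinct cycles must have edge-disjoint images, so an injective $G$-preimage exists iff for every period $p$ the number of period-$p$ orbits of $X$ is at most that of $Y$ — again a counting check. Block map existence uses Lemma~\ref{lem:InjSurj}(1): any homomorphism lifts, and a homomorphism exists iff every period $p \in P(X)$ is a multiple of some period $q \in P(Y)$, a divisibility test over the (polynomially many) pairs.

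The one case needing slightly more than counting is factoring, and I expect it to be the main obstacle. By Lemma~\ref{lem:InjSurj}(3) a factor map corresponds to an edge-surjective homomorphism, i.e.\ a total rotation-respecting homomorphism whose image covers every orbit (hence every rotation edge and, automatically, every label-$1$ self-loop) of $Y$. I would model this as a bipartite feasibility problem: orbits of $X$ on one side, orbits of $Y$ on the other, with an edge whenever the target period divides the source period. A factor map exists iff (i) every $X$-orbit has at least one admissible target (so the homomorphism is total, i.e.\ the block-map condition holds) and (ii) there is a system of distinct representatives saturating the $Y$-side, after which the remaining $X$-orbits are assigned arbitrarily to admissible targets. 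Condition (ii) is checkable by Hall's theorem via bipartite matching in polynomial time, completing the argument; everything else reduces to divisibility and multiset counting, all clearly polynomial.
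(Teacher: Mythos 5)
Your proposal is correct and follows essentially the same route as the paper: reduce everything to the multiset of least periods of the orbits, decide conjugacy by multiset equality, embeddings by a period-preserving injection, block maps by the divisibility condition, and factor maps by the block-map condition together with a bipartite matching saturating the $Y$-side. The extra detail you supply on why the structure graph of a rank~$1$ shift carries only label-$1$ transition self-loops, and why rotation-respecting homomorphisms commute with $\sigma$, is a sound elaboration of steps the paper leaves implicit.
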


\begin{proof}
Every such shift space $X$ is a finite union of periodic orbits of some least periods $p_1, \ldots, p_\ell \in \N$, which can be computed from the structure graph in polynomial time. Let $Y$ be another one with least periods $q_1, \ldots, q_m \in \N$. Clearly, $X$ and $Y$ are conjugate if and only if $(p_1, \ldots, p_\ell) = (q_1, \ldots, q_m)$, if the periods are given in ascending order. A block map from $X$ to $Y$ exists if and only if, for every $i \in [1,\ell]$, there exists $j \in [1,m]$ with $q_j | p_i$, which is equivalent to the condition that the orbit of $X$ with period $p_i$ can be mapped onto the orbit of $Y$ with period $q_j$. An embedding from $X$ to $Y$ must map every orbit of $X$ to an orbit of $Y$ of the same period, so one exists if and only if there exists an injection $\alpha : [1,\ell] \to [1,m]$ with $p_i = q_{\alpha(i)}$ for all $i \in [1,\ell]$. These checks are easy to do in polynomial time.

The interesting case is factoring. For this, construct a bipartite graph $G$ with
\[ \V(G) = \{L\} \times [1, \ell] \cup \{R\} \times [1, m], \]
and $((L, i), (R, j)) \in \E(G)$ if and only if $q_j | p_i$. It is easy to see that there exists a factor map from $X$ to $Y$ if and only if there exists a block map from $X$ to $Y$, and $G$ has a matching of size $m$ (that is, we can find separate preimages for all the orbits of $Y$). Computing a matching of maximal size -- and thus the maximal size itself -- is well-known to be in $\Pol$, see for example Section~5.2 in \cite{Gi85}.
\qed
\end{proof}

Now, we give our main result: the complexity of conjugacy of rank $2$ countable SFTs (and sofic shifts).

\begin{theorem}
Conjugacy of countable sofic shifts or SFTs of Cantor-Bendixson rank at most $2$ is $\GI$-complete.
\end{theorem}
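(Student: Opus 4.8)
membership in $\GI$, and $\GI$-hardness. By Corollary~\ref{cor:Iso}, two rank $2$ countable sofic shifts $X$ and $Y$ are conjugate if and only if their structure graphs $G(X)$ and $G(Y)$ are isomorphic (via vertex renaming), where the isomorphism must be edge-bijective, preserve transition-edge labels, and respect rotation edges. Since we assume shifts are given by their structure graphs (and by Lemmas~\ref{lem:Representations} and~\ref{lem:Representations2} this is polynomially equivalent to the right-resolving symbolic edge shift encoding), conjugacy reduces directly to an isomorphism test on a particular class of labeled directed graphs. So the whole theorem is really a statement about isomorphism of structure graphs.

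**For membership in $\GI$,** the key observation is that a structure graph is just a finite labeled directed graph whose edges carry labels from the set $\{\orb\} \cup \N$ (rotation edges and transition edges with nonnegative integer labels), and that the required isomorphism is exactly a label-preserving, rotation-edge-respecting graph isomorphism. Isomorphism of such finite labeled directed graphs is well known to reduce to plain graph isomorphism: one can encode labels, edge directions, and the rotation/transition distinction by standard gadget constructions — e.g. attaching distinguishing undirected gadgets to each edge or vertex to simulate colors, directions, and integer labels (the integer labels are polynomially bounded by the size of the structure graph, so they can be encoded in unary or by small gadgets without blowup). This gives a polynomial-time many-one reduction from the conjugacy problem to graph isomorphism, placing it in $\GI$. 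The main care here is just to verify that the gadgets faithfully preserve and reflect exactly the structure-graph isomorphisms and introduce no spurious ones.

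**For $\GI$-hardness,** I would reduce from a known $\GI$-complete isomorphism problem — the most convenient being isomorphism of directed graphs, or of labeled or $\{0,1\}$-colored graphs, all listed as $\GI$-complete in the excerpt. Given an arbitrary such graph $H$, I would construct in polynomial time a rank $2$ countable sofic shift $X_H$ (presented by its structure graph, or equivalently by a right-resolving symbolic edge shift via Lemma~\ref{lem:Representations}) so that $H \cong H'$ if and only if $X_H$ is conjugate to $X_{H'}$, i.e. $G(X_H) \cong G(X_{H'})$. The construction would place one periodic orbit (a bundle of rotation edges) for each vertex of $H$, and a transition edge carrying an appropriate label for each edge of $H$, using distinct labels or auxiliary gadget-vertices to encode vertex colors, edge directions, and any other structure of $H$ that must be preserved by the isomorphism. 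One must ensure the resulting labeled directed graph actually arises as a structure graph of some rank $2$ countable sofic shift — which the reconstruction in the proof of Lemma~\ref{lem:Representations} guarantees, since that lemma shows every structure graph (with finite positive transition labels and consistent rotation edges) is realizable by an explicit right-resolving labeled graph.

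**The main obstacle** I expect is the hardness direction, specifically arranging the encoding so that the structure-graph isomorphism is forced to coincide exactly with a graph isomorphism of $H$ — no more and no less. The subtlety is that structure graphs come with extra rigidity (rotation edges around each periodic orbit, self-loop transition edges carrying the "$+1$" for the periodic point itself, and the requirement that transition labels be preserved) that could either help pin down the isomorphism or accidentally collapse distinct vertices of $H$. I would resolve this by giving each vertex-gadget a distinctive local signature (for instance, encoding the vertex identity or color into the period length of its orbit, or into a unique pattern of self-loop labels) so that any structure-graph isomorphism must map vertex-gadgets to vertex-gadgets bijectively and respect the edge structure, thereby inducing precisely an isomorphism of $H$, and conversely. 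Verifying both implications of this correspondence, while keeping the construction polynomial in the size of $H$, is the technical heart of the argument.
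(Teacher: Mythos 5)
Your proposal follows essentially the same route as the paper: membership in $\GI$ via Corollary~\ref{cor:Iso} together with a gadget reduction of structure-graph isomorphism to (directed) graph isomorphism, and hardness by encoding a $\GI$-complete colored-graph isomorphism problem into structure graphs with one periodic orbit per vertex and one transition edge per edge --- the paper's explicit construction being $X_G = \bigcup_{\{u,v\}\in\E(G),\,\pi(u)=0}\B^{-1}(u^*v^*)$, where the $\{0,1\}$-coloring supplies the orientation of the transition edges, exactly the role you anticipated for vertex colors. One minor caution: transition-edge labels written in binary need not be polynomially bounded, so rather than unary encoding one should assign a distinct small gadget to each \emph{distinct} label value occurring in the input (as the paper does with its numbers $m_k\geq 3$), of which there are only polynomially many.
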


\begin{proof}
Corollary~\ref{cor:Iso} states that conjugacy is equivalent to the equivalence problem of structure graphs under vertex renaming. This problem, on the other hand, is easily reducible to the isomorphism problem of directed graphs (associate to each label $k$ a distinct small number $m_k \geq 3$, and replace every edge $e : x \to y$ with label $k$ by $m_k$ parallel paths of length $m_k$), which is $\GI$-complete \cite{ZeKoTy85}. Thus the conjugacy problem is in $\GI$.

To prove completeness, we reduce the color-preserving isomorphism problem of $\{0,1\}$-colored graphs to the conjugacy problem of SFTs; the claim then follows, as the former is $\GI$-complete \cite{ZeKoTy85}. Let thus $G = (V, E, \pi)$ be a $\{0,1\}$-colored graph with the coloring $\pi : V \to \{0,1\}$. We may assume that $G$ contains no isolated vertices. Define a rank $2$ countable SFT by
\[ X_G = \bigcup_{\substack{\{u,v\} \in \E(G) \\ \pi(u) = 0}} \B^{-1}(u^* v^*). \]
After renaming the vertices, the structure graph of $X_G$ is exactly $G$, except that each vertex $v$ has gained two self-loops labeled $\orb$ and $1$, and each edge $\{u,v\}$ where $u$ is colored with $0$ has gained the label $1$. Thus, two $\{0,1\}$-colored graphs $G$ and $H$ are isomorphic by a color-preserving isomorphism if and only if the structure graphs of $X_G$ and $X_H$ are equivalent up to renaming the vertices. By Corollary~\ref{cor:Iso}, this is equivalent to the conjugacy of $X_G$ and $X_H$.
\qed
\end{proof}

With the same ideas, we obtain many $\NP$-complete problems, at least for countable sofic shifts.

\begin{theorem}
Existence of embeddings between countable sofic shifts of Cantor-Bendixson rank at most $2$ is $\NP$-complete. Also, there exist countable rank $2$ sofic shifts $X$ and $Y$ such that for a given countable rank $2$ sofic shift $Z$, existence of block maps from $Z$ to $X$, and of factor maps from $Z$ to $Y$, are $\NP$-complete problems.
\end{theorem}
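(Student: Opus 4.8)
The plan is to translate each of the three existence problems, via the functor $G$ of Proposition~\ref{prop:Functor} and the characterizations of Lemma~\ref{lem:InjSurj}, into a corresponding graph-homomorphism problem, and then invoke the $\NP$-completeness results quoted in the definitions. Concretely, combining Proposition~\ref{prop:Functor} with Lemma~\ref{lem:InjSurj}, a block map $Z \to X$ exists iff a rotation-respecting homomorphism $G(Z) \to G(X)$ exists (part~1); an embedding $Z \to W$ exists iff such a homomorphism is edge-injective and satisfies $\pi(e) \leq \pi(\tau(e))$ on transition edges (part~2); and a factor map $Z \to Y$ exists iff it is edge-surjective and satisfies $\sum_{\tau(e') = e} \pi(e') \geq \pi(e)$ on transition edges of $G(Y)$ (part~3). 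These correspond respectively to ordinary homomorphisms, edge-injective homomorphisms, and edge-surjective homomorphisms (compactions), whose existence problems are $\NP$-complete by \cite{HeNe90}, \cite{Bi05}, and \cite{Vi97}.

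For membership in $\NP$, since the structure graphs have polynomial size and are computable in polynomial time from the input by Lemma~\ref{lem:Representations}, I would guess a vertex-and-edge map $\tau$ between the two structure graphs and verify in polynomial time that it is a homomorphism respecting rotation edges and satisfies the appropriate label (in)equality from Lemma~\ref{lem:InjSurj}. That lemma guarantees such a $\tau$ exists exactly when the sought block map, embedding, or factor map does, so all three problems lie in $\NP$.

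For hardness, I would reuse the encoding $X_G = \bigcup \B^{-1}(u^* v^*)$ from the conjugacy theorem, whose structure graph is essentially the input graph with a rotation self-loop and a transition self-loop added at each vertex and every transition edge labeled $1$. With all transition labels equal to $1$, the label conditions of Lemma~\ref{lem:InjSurj}(2)--(3) become automatic once edge-injectivity or edge-surjectivity holds, so the three existence problems reduce cleanly to the three graph-homomorphism variants. For embeddings, both shifts are part of the input, so one encodes the two given graphs of the edge-injective homomorphism problem of \cite{Bi05}; for block maps I fix $X$ to encode a fixed non-bipartite $H$ and reduce from $H$-coloring \cite{HeNe90}; for factor maps I fix $Y$ to encode a fixed target from the compaction-hard family of \cite{Vi97}.

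The main obstacle will be verifying soundness despite two features of structure graphs with no analogue in the plain graph-homomorphism problems: the mandatory rotation edges, and the transition self-loop present at every vertex. The self-loops in particular threaten soundness, since a naive homomorphism could collapse an edge onto a target self-loop and thereby satisfy the structure-graph problem without giving a genuine homomorphism of the encoded graphs. To rule this out I would exploit the rotation edges: encoding each vertex of the source and target graphs as a periodic orbit of least period at least $2$ eliminates rotation self-loops and forces any rotation-respecting homomorphism to map whole orbits bijectively onto whole orbits, while a careful choice of periods and of the transition edges in the fixed targets prevents an inter-orbit transition edge from being absorbed into an intra-orbit self-loop. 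Checking that this rigidity makes structure-graph homomorphisms correspond exactly to $H$-colorings, compactions, and edge-injective homomorphisms --- and orienting the undirected input edges consistently with the rotation structure --- is the delicate part; the remaining bookkeeping is routine.
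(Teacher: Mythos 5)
Your $\NP$-membership argument and your overall reduction strategy (translate block maps, embeddings and factor maps into homomorphisms, edge-injective homomorphisms and compactions via Proposition~\ref{prop:Functor} and Lemma~\ref{lem:InjSurj}, then quote \cite{HeNe90,Bi05,Vi97}) match the paper. The gap is in the soundness of the hardness reduction, and it is precisely the part you defer as ``the delicate part.'' You correctly diagnose that the conjugacy-proof encoding fails because transition self-loops can absorb inter-orbit edges, but your proposed fix --- period $\geq 2$ orbits plus ``a careful choice of periods and of the transition edges in the fixed targets'' --- cannot work as described: every periodic point $x$ of every rank~$2$ countable sofic shift has $x \in C_{(x,x)}$, so a transition self-loop with label $\geq 1$ sits at every vertex of every structure graph and no choice of target removes it. Moreover, for the embedding problem both shifts are inputs, so you cannot engineer the target at all. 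With a single transition edge per graph edge $\{u,v\}$, a structure-graph homomorphism is always free to identify the orbits of $u$ and $v$ and send that transition edge to the unavoidable self-loop, so a block map $X_G \to X_H$ need not induce a graph homomorphism $G \to H$, and the reduction is unsound.

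The paper's construction resolves this with an idea absent from your proposal: it sets $X_G = \bigcup_{\{u,v\} \in \E(G)} \B^{-1}((\# u)^* (\# v)^*) \cup \B^{-1}((\# u)^* (v \#)^*)$ with a fresh marker $\#$, so that each vertex becomes a period-$2$ orbit and, crucially, each graph edge contributes \emph{two} transition configurations, $\INF (\# u)(\# v) \INF$ and $\INF (\# u)(v \#) \INF$, landing on the two distinct points $\INF(\#v)\INF$ and $\sigma(\INF(\#v)\INF)$ of the same orbit. If a block map $f$ sent both to periodic points, both images would have to equal $f(\INF(\#u)\INF)$, forcing $f(\INF(\#v)\INF) = \sigma(f(\INF(\#v)\INF))$, i.e.\ a period-$1$ point in $X_H$ --- which does not exist. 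Hence at least one image is aperiodic, which witnesses $\{\phi_f(u),\phi_f(v)\} \in \E(H)$ and makes $f \mapsto \phi_f$ a genuine graph homomorphism; injectivity and surjectivity then transfer in both directions. Without this doubling device (or an equivalent one), your reduction only proves the easy direction (graph homomorphism $\Rightarrow$ block map), so the hardness claim is not established.
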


\begin{proof}
Proposition~\ref{prop:Functor} and Lemma~\ref{lem:InjSurj} imply that all three problems are in $\NP$, since the conditions given in Lemma~\ref{lem:InjSurj} are easy to check in polynomial time for a given structure graph homomorphism.

We prove the completeness of all three problems using the same construction. For all graphs $G$, we define a countable rank $2$ sofic shift
\[ X_G = \bigcup_{\{u,v\} \in \E(G)} \B^{-1}((\# u)^* (\# v)^*) \cup \B^{-1}((\# u)^* (v \#)^*), \]
where $\# \notin \V(G)$ is a new symbol.

For two graphs $G$ and $H$, we show a correspondence between graph homomorphisms $\phi : G \to H$ and block maps $f : X_G \to X_H$. First, for each homomorphism $\phi$, we define a block map $f_\phi$ as the symbol map $f_\phi(\#) = \#$ and $f_\phi(v) = \phi(v)$ for all $v \in \V(G)$, and by the definition of $X_G$ and $X_H$, it is a well-defined block map from $X_G$ to $X_H$. Second, for a block map $f$, we define a homomorphism $\phi_f$ by $\phi_f(v) = w$ if and only if $f(\INF (\# v) \INF) \in \Orb_\sigma(\INF (\# w) \INF)$. Then $\{u, v\} \in \E(G)$ implies that $\INF (\# u) (\# v) \INF, \INF (\# u) (v \#) \INF \in X_G$, and since $X_H$ contains no points of period $1$, the $f$-image of at least one the configurations is aperiodic. But this implies $\{\phi_f(u), \phi_f(v)\} \in \E(H)$ by the definition of $X_H$, so that $\phi_f$ is indeed a graph homomorphism.

It is easy to see that surjectivity or injectivity of $\phi$ (on both vertices and edges) implies the same property for $f_\phi$, and analogously for $f$ and $\phi_f$. The claim then directly follows from the corresponding $\NP$-completeness results for graph homomorphisms, compactions and edge-injective homomorphisms found in \cite{HeNe90,Vi97,Bi05}, respectively.
\qed
\end{proof}

\section{Further Discussion}

In this article, we have studied the conjugacy problem of countable SFTs and sofic shifts, and have shown that the special case of Cantor-Bendixson rank $2$ is decidable. The classical formulation of the conjugacy problem of SFTs only considers mixing SFTs of positive entropy, which are uncountable, and conceptually very far from countable SFTs. Even though our results do not directly advance the study of this notoriously difficult problem, they show that related decision problems can be computable, and even have a relatively low computational complexity.

A natural continuation of this research would be to extend the results to countable sofic shifts of higher ranks, possibly for all countable sofic shifts. We suspect that for rank $3$ countable sofic shifts, the problem is no longer in $\GI$, as distances between two disturbances can encode infinitely many essentially different configurations. However, we also believe it to be decidable, possibly even in $\NP$, as any conjugacy has a finite radius and must thus consider distant disturbances separately. The problem is then essentially combinatorial, and finding a suitable representation for the shift spaces, similar to the structure graph, might be the key to determining its complexity class.

Of course, a lot of tools have been developed for tackling the mixing case, and it could be that these tools easily decide conjugacy in the countable case. For example, it is well-known that a weaker type of conjugacy called shift equivalence is decidable in high generality, so it would be enough to show that in the case of countable SFTs, this is equivalent to conjugacy (although we have not been able to show this). Thus, we explicitly state our interest:

\begin{question}
Is the conjugacy of countable SFTs decidable?
\end{question}

In the usual case of mixing (and uncountable) SFTs and sofics, the decidability of conjugacy has not yet been solved. However, there might be an easy way to show that the conjugacy problem is, say, $\NP$-hard. We are not aware of such investigations in the literature. Such a view might be helpful in finding ways to encode computation in instances of the conjugacy problem. Such a way would presumably need to be found in order to show that the problem is undecidable, but might also be useful (or at least interesting) if it turns out to be decidable.

Finally, we note that in the case of multidimensional SFTs, the conjugacy problem is undecidable. In fact, it was even shown in \cite{JeVa12} that for all two-dimensional SFTs $X$, it is undecidable whether a given SFT $Y$ is conjugate to it (and they also determine the complexity of finding factor maps).

\bibliographystyle{plain}
\bibliography{../../../bib/bib}{}

\end{document}